\title{A comparative study of finite element methods for a class of  harmonic map heat flow problems}
\author{Nam Anh Nguyen\thanks{Institut f\"ur
		Geometrie und Praktische  Mathematik, RWTH-Aachen University, D-52056 Aachen,
		Germany; email: {\tt nguyen@igpm.rwth-aachen.de}}
	  \and Arnold Reusken\thanks{Institut f\"ur
		Geometrie und Praktische  Mathematik, RWTH-Aachen University, D-52056 Aachen,
		Germany; email: {\tt reusken@igpm.rwth-aachen.de}}  }
\newcommand*\Laplace{\mathop{}\!\mathbin\bigtriangleup}
\newcommand{\bu}{\mathbf{u}}
\newcommand{\bv}{\mathbf{v}}
\newcommand{\by}{\mathbf{y}}
\newcommand{\bz}{\mathbf{z}}
\newcommand{\bV}{\mathbf{V}}
\newtheorem{remark}{Remark}
\begin{document}
\maketitle

\begin{abstract}
 In this paper, we review and systematically compare three finite element discretization methods for a harmonic map heat flow problem from the unit disk in $\mathbb{R}^2$ to the unit sphere in $\mathbb{R}^3$ in an unified framework. Numerical tests validate the convergence rates  in a regime of smooth solutions and are used to compare the methods in terms of computational efficiency. For one of the methods a discrete inf-sup stability result is derived. 
\end{abstract}

\section{Introduction}
Let $\Omega \subset \Bbb{R}^2$ be a Lipschitz domain and $S^2$ the unit sphere in $\Bbb{R}^3$. In this paper, we study numerical methods for the discretization of the following harmonic map heat flow (HMHF) problem. Given an initial condition $\bu_0: \Omega \to S^2$, determine $\bu(t,\cdot): \Omega \to S^2$ such that
\[
 \partial_t\mathbf{u}= \Laplace \mathbf{u} + \rvert \nabla \mathbf{u} \lvert^2 \mathbf{u}, \quad \bu(0,\cdot)= \bu_0, \quad \bu(t,\cdot)_{|\partial \Omega}=(\bu_0)_{|\partial \Omega}, \quad t \in (0,T]. 
\]
This problem is obtained as the $L^2$ gradient flow of the Dirichlet energy $\int_\Omega |\nabla \bv|^2 \, dx$ on vector fields $\bv: \Omega \to \Bbb{R}^3$ that satisfy a pointwise unit length constraint. Unit lenght minimizers of this Dirichlet energy are called harmonic maps.  The HMHF problem is also closely related to the Landau-Lifshitz-Gilbert (LLG) equation. The HMHF equation can be considered as the limit of the LLG equation where the precessional term vanishes and only damping is left \cite{Melcher2011}. 
Harmonic maps, HMHF and LLG equations have numerous practical applications, for example, in the modeling of ferromagnetic materials or of liquid crystals, cf. e.g.~\cite{Lakshmanan2011,LiuLi2013Skyr,prohl2001,Heinze2013}.

There is an extensive mathematical literature in which topics related to well-posedness, weak formulations, regularity, blow up phenomena and convergence of solutions of the HMHF problem to harmonic maps are studied, cf. e.g~\cite{Struwe85,Struwe08,Eells1964HarmonicMO,Hamilton1975HarmonicMO,KCChang89,Freire95,ChangDingYe1992,vandHout2001}.
 
In this paper, we focus on finite element discretization methods for the HMHF problem. Early work on the development and analysis of numerical methods for HMHF or LLG problems is found in \cite{BarPro06,BaLuPr09,prohl2001,BartelsProhl2007,Alouges2008FEMLL, AlougesJaisson06,Cimrak05,BanasProhlSchaetzle2010}. In recent years, there has been a renewed interest in the numerical analysis of methods for this problem class \cite{Bai2025,KovacsLuebich,BaKoWa22,BBPS24,AkrivisBartelsPalus2025,ABRW25,Santacreu2017,ChenWangXie2021,CaiChenWangXie2022,NguyenReusken,NguyenReuskenFE25}.

 The main difficulties in solving HMHF (or LLG)  numerically are the non-linear term and the fact that the solution has to satisfy the unit length constraint. In the literature, three different finite element based approaches for treating the unit length constraint and the nonlinearity have been proposed. We briefly outline the basic ideas of these approaches and refer to Section~\ref{secFEM} for more details.
 
 In the first approach \cite[Chapter 4]{prohl2001}, \cite{BarPro06,BaLuPr09,RongSun2021,Gui22}, the unit length constraint is enforced weakly by an additional step of renormalizing the numerical solution at every mesh point.  Prohl  \cite[Chapter 4]{prohl2001} shows that the renormalization step can be interpreted as a penalizing term in the equation.  This renormalization leads to a nonlinear system  that has to be solved in each time step, cf.~ \cite{BaLuPr09}. In \cite[Section 4.3.2]{prohl2001} and \cite{RongSun2021,Gui22}, the nonlinearity is treated by a simple extrapolation in time in a semi-implicit Euler scheme which results in a linear system in each time step.  In the recent paper \cite{Bai2025}  this method is extended to more general harmonic map heat flows from a domain $\Omega \subset \Bbb{R}^d$ into a closed smooth  hypersurface in $\Bbb{R}^n$ and using a higher order BDF time stepping scheme.
 
In the second approach \cite{Alouges2008FEMLL, AlougesJaisson06,KovacsLuebich,AkrivisBartelsPalus2025,ABRW25}, the   unit length constraint is reformulated as an orthogonality condition between the solution and its time derivative. This leads to  a solution space tangential to the sphere and the finite element space used in the discretization  mimics this. In this approach one uses semi-implict BDF time stepping schemes in which a straightforward extrapolation is used to treat the nonlinearity. 

In the third approach \cite{BartelsProhl2007}, the double cross product reformulation of the HMHF (or LLG) equation is used, cf.~\eqref{eq:HMHF_PDE_DoubleCross} below. This reformulation has the nice property that the unit length constraint is implicit in the formulation, meaning that its solution  preserves the unit length of the initial condition. In the finite element spatial discretization, the norm preserving structure is mirrored. For treating the nonlinearity that arises in this double cross product reformulation the authors of  \cite{BartelsProhl2007} propose a fixed point iteration.  This approach has also been applied to the $p$-harmonic map heat flow \cite{BarPro08}. Below, cf. Section~\ref{sec:HMHF_Bartels_Method}, besides this fixed point linearization we also study a Newton linearization.

Although for all three approaches one finds papers in which results of numerical experiments with methods based on these are presented, we are not aware of any literature in which a systematic comparison of these three very different techniques is presented. 
 
The main contribution of this paper is twofold. Firstly, we perform a systematic numerical study and compare convergence and efficiency properties of the above-mentioned methods. Secondly, the paper has review character in the sense that we present these methods in an unified framework.  
For the comparative study we focus on the smooth regime, i.e. HMHF problems with a globally smooth solution.  In the smooth regime, for a given (standard) finite element space and consistency order of the  time stepping scheme it is known what the optimal rates of convergence (w.r.t. mesh size and time step size) of the different methods are, which then allows a fair comparison of these methods. In the blow up case, singularities develop and a fair comparison of computational efficiencies becomes much more difficult. For the HMHF, exact solutions are not known, except for very special cases. Hence, for the method comparison we have to determine reliable and highly accurate reference solutions that can be used as sufficiently accurate approximations of the exact solution.   We  therefore restrict to a class of initial data in  \eqref{HMHFeq}, for which global in time smoothness is guaranteed and, due to rotational symmetry, the problem can be transformed to a spatially \emph{one}-dimensional problem. Using the latter property we can determine very accurate reference solutions by numerically solving this lower dimensional problem. \\
 As an outlook to further research we also present results of  a numerical experiment in which the three methods that we compare are applied to a HMHF problem with finite time blow up.
 As a further contribution of this paper we present a discrete inf-sup stability result for the second approach, cf. Lemma~\ref{leminfsup}  below. This result is relevant for an efficient implementation of the method.
 
 The rest of the paper is organized as follows. In Section \ref{sec:HMHF}, we  describe the harmonic map heat flow problem and its formulation for a radially symmetric case. Section \ref{secFEM} describes the finite element discretizations for this radially symmetric case and for the general harmonic map heat flow problem in the respective subsections. In each subsection, we also present  numerical results to validate the convergence rates. The source code used in  these experiments can be found in \cite{nguyen_2025_15481333} and is based on the software package \emph{Netgen/NGSolve}. Results of a numerical experiment for a problem with finite time blow up are presented in Section~\ref{SecOutlook}. In Section \ref{SecComparison} we draw  conclusions and discuss computational efficiency aspects.
 
\section{The harmonic map heat flow problem} \label{sec:HMHF}
Let 
\begin{align*}
         D &= \{x = (x_1,x_2) \in \mathbb{R}^2 : |x| < 1\} \\
        S^2 &= \{x = (x_1,x_2,x_3) \in \mathbb{R}^3 : |x| = 1\} 
\end{align*}
be the open unit disk in $\mathbb{R}^2$ and the unit sphere in $\mathbb{R}^3$, respectively. Then, given $\mathbf{u}_0 \in C^2(\Bar{D},S^2)$ and some $T>0$, we consider the following initial-boundary value problem
\begin{equation} \label{HMHFeq} 
\begin{split}
    \partial_t\mathbf{u}&= \Laplace \mathbf{u} + \rvert \nabla \mathbf{u} \lvert^2 \mathbf{u} \quad \text{ on } D, t \in (0,T],  \\
    \mathbf{u}(0,x) &= \mathbf{u}_0(x) \quad \text{ for } x \in D, 
\end{split}
\end{equation}
which is the harmonic map heat flow problem. We prescribe the Dirichlet boundary condition
\begin{align}
    \mathbf{u}(t,x) &= \mathbf{u}_0(x) \text{ for } x \in \partial D, t \in [0,T]. \label{eq:HMHF_DirichletBoundary}
\end{align}
The PDE \eqref{HMHFeq} is the  $L^2$-gradient flow of the energy
\begin{align}
    E(\mathbf{u}):=\frac{1}{2} \int_D \left|\nabla \mathbf{u}\right|^2 dx \label{eq:HMHF_Energy}
\end{align} 
for vector fields satisfying a pointwise unit length condition. 
Stationary solutions of \eqref{HMHFeq} are called harmonic maps and are critical points of \eqref{eq:HMHF_Energy}.
\\
In this paper  we study three different finite element discretization methods for   \eqref{HMHFeq} in a regime where we have a global in time smooth solution. For computing discretization errors we need a sufficiently accurate approximation of the exact solution of  \eqref{HMHFeq}. This motivates why we restrict to a class of initial data in  \eqref{HMHFeq}, for which global in time smoothness is guaranteed and, due to rotational symmetry, the problem can be transformed to a spatially \emph{one}-dimensional problem. Due to the latter property we can determine very accurate reference solutions by numerically solving this lower dimensional problem, cf. Section~\ref{sec:FEM_RSHMHF} below.
This class of smooth rotationally symmetric solutions is obtained  from the following fundamental result. 
\begin{theorem}[Chang-Ding \cite{Chang1991}] \label{theo:HMHF_1D_Global_Smooth}
Using polar coordinates $(r,\psi)$ on the disk $D$, let the initial condition have the form
\begin{equation}
   \mathbf{u}_0(r,\psi) =
   \begin{pmatrix}  \cos{\psi} \sin{ u_0(r) } \\  \sin{\psi} \sin{ u_0(r) } \\ \cos{u_0(r)}\end{pmatrix}   \label{eq:HMHF_InitialCondSphericallySymmetric}
\end{equation}
where $u_0 \in C^2([0,1])$ with $\|u_0 \|_{L^\infty} \leq \pi$ is given, then there exists a unique global smooth solution to \eqref{HMHFeq} of the form
\begin{equation}
    \mathbf{u}(t,r,\psi) =
   \begin{pmatrix}  \cos{\psi} \sin{ u(t,r) } \\  \sin{\psi} \sin{ u(t,r) } \\ \cos{u(t,r)}\end{pmatrix}. \label{eq:HMHF_SphericallySymmtricSolution} 
\end{equation}
\end{theorem}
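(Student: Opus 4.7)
The strategy is to exploit the equivariant form of the ansatz to reduce the vector PDE \eqref{HMHFeq} to a scalar parabolic equation on the interval $(0,1)$, to solve that reduced problem locally by standard methods, and then to use a maximum principle in which the hypothesis $\|u_0\|_{L^\infty}\le\pi$ plays the crucial role of ruling out the energy concentration that otherwise obstructs global smoothness of HMHF in two spatial dimensions.

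First I would substitute the ansatz \eqref{eq:HMHF_SphericallySymmtricSolution} into \eqref{HMHFeq}, using the polar expression $\Delta=\partial_r^2+r^{-1}\partial_r+r^{-2}\partial_\psi^2$. A direct calculation yields
\[
|\nabla\mathbf{u}|^2=u_r^2+\frac{\sin^2 u}{r^2},
\]
and after cancelling a common factor ($\cos u\,\cos\psi$ for the first component, etc.) all three components of the equation collapse to the same scalar PDE
\[
u_t=u_{rr}+\frac{u_r}{r}-\frac{\sin u\,\cos u}{r^2}\quad\text{on }(0,T]\times(0,1),
\]
with the axis condition $u(t,0)=u_0(0)$ forced by smoothness of $\mathbf{u}$ at the origin (hence $u_0(0)\in\{0,\pm\pi\}$), the Dirichlet condition $u(t,1)=u_0(1)$, and initial datum $u(0,\cdot)=u_0$. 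Conversely, any smooth solution of this scalar problem defines via \eqref{eq:HMHF_SphericallySymmtricSolution} a smooth solution of the vector problem.

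Local existence of a unique smooth $u$ then follows from standard semilinear parabolic theory, for example a fixed-point argument based on the heat kernel of the radial Laplacian on $D$. The key a priori estimate is the pointwise bound $|u(t,r)|\le\pi$: the constants $u\equiv 0$ and $u\equiv\pm\pi$ are stationary solutions of the scalar PDE, so the parabolic comparison principle together with $\|u_0\|_{L^\infty}\le\pi$ traps $u$ in $[-\pi,\pi]$ throughout the interval of existence. Uniqueness within the symmetric class follows from uniqueness of the scalar problem, and the symmetry of the initial datum is preserved by the flow.

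The main obstacle is extending the local solution globally in time. Two-dimensional HMHF is energy-critical and singularities can form only through bubble concentration at isolated space-time points; in the equivariant reduction such a bubble at the origin would force $u$ to sweep an interval of length at least $\pi$ on a vanishing radial scale. The $L^\infty$ barrier from the comparison principle excludes precisely this kind of oscillation, and combined with the monotone decay of the Dirichlet energy $E(\mathbf{u}(t,\cdot))$ along the flow and a Struwe-type local no-concentration argument adapted to the equivariant setting, one obtains uniform higher-regularity bounds and hence continuation to every $T>0$. This concentration-exclusion step is the essential contribution of Chang and Ding and is the delicate ingredient; everything else is essentially standard parabolic machinery.
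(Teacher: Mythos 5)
The paper offers no proof of this statement: Theorem~\ref{theo:HMHF_1D_Global_Smooth} is quoted verbatim as a known result of Chang and Ding \cite{Chang1991}, so there is no internal argument to compare yours against. Judged on its own terms, your sketch does follow the broad strategy of the original reference: the equivariant reduction you derive is exactly the paper's equation \eqref{eq:HMHF_1DPDE} (note $\sin u\cos u=\tfrac12\sin(2u)$, and your expression for $|\nabla\mathbf{u}|^2$ is consistent with the energy \eqref{eq:HMHF_1DEnergy}), and the two pillars --- an $L^\infty$ barrier at $\pm\pi$ from the comparison principle, plus exclusion of equivariant bubbling --- are indeed the ones Chang and Ding use.

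However, as a proof the proposal has genuine gaps. First, the decisive step is precisely the one you defer: ``a Struwe-type local no-concentration argument adapted to the equivariant setting'' is the entire content of the theorem, not standard machinery, and you do not carry it out. In particular, in the borderline case $\|u_0\|_{L^\infty}=\pi$ the weak bound $|u|\le\pi$ does not by itself exclude a bubble, since an equivariant bubble corresponds to $u$ sweeping exactly the interval $[0,\pi]$ on a vanishing scale; one needs either the strong maximum principle to upgrade to $|u(t,\cdot)|<\pi$ for $t>0$ together with explicit sub/supersolution barriers, or a quantitative local energy estimate near $r=0$, and neither is supplied. Second, your application of the comparison principle is not immediate: the nonlinearity $-\sin u\cos u/r^2$ is not uniformly Lipschitz up to $r=0$, so the standard comparison theorem must be combined with the axis condition $u(t,0)=0$ (and the fact that continuity of $\mathbf{u}_0$ at the origin forces $\sin u_0(0)=0$) to control the singular endpoint. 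These are exactly the delicate points of \cite{Chang1991}; without them the argument establishes local existence and an a priori bound but not global smoothness.
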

The spherically symmetric or corotational form of the solution \eqref{eq:HMHF_SphericallySymmtricSolution} reduces the harmonic map heat flow \eqref{HMHFeq} to the one-dimensional problem
\begin{equation}\label{eq:HMHF_1DPDE} \begin{split}
    \partial_t u &= \partial_{rr}u + \frac{1}{r}\partial_r u - \frac{\sin{(2 u)}}{2r^2} \quad \text{ for } r \in (0,1), t \in (0,T],  \\
    u(0,r) &= u_0(r) \quad \text{ for } r \in (0,1), 
\end{split}
\end{equation}
with $u_0(0)= 0$ where we prescribe the Dirichlet boundary conditions
\begin{align}
    u(t,0) =u_0(0), \; u(t,1)=u_0(1) \text{ for }  t \in [0,T].\label{eq:HMHF_1DDirichletBoundary}
\end{align}
We will call \eqref{eq:HMHF_1DPDE} the corotational harmonic map heat flow (CHMHF).
The energy of solutions of \eqref{eq:HMHF_1DPDE} is then given by \cite[Equation (1.4)]{vdBergHulshofKing2003}
\begin{align}
    E(u) = \pi \int_0^1 r (\partial_r u)^2+ \frac{\sin^2{(u)}}{r} \, dr. \label{eq:HMHF_1DEnergy}
\end{align}

\section{Finite element discretization methods} \label{secFEM}
In this section, we describe discretization methods for the HMHF problem. We apply the method of lines approach in which for spatial discretization we use finite element methods and for discretization in time a low order BDF (BDF1 or BDF2) method is used. In Section~\ref{sec:FEM_RSHMHF}, we consider the CHMHF \eqref{eq:HMHF_1DPDE}.  We propose a basic method for which  optimal order convergence rates in $L^2$- and $H^1$-norms are demonstrated numerically. 
We use this method to construct highly accurate numerical reference solutions  for the HMHF problem \eqref{HMHFeq}. These reference solutions are used to compute discretization errors for the three methods that are treated in Section~\ref{sec:FEM_HMHF}.
\\
In  Section~\ref{sec:FEM_HMHF} we consider three very different finite element discretizations for HMHF \eqref{HMHFeq} based on the schemes presented in  \cite[Chapter 4]{prohl2001},   \cite{KovacsLuebich} and  \cite{BartelsProhl2007}. These methods  differ in how the unit length constraint of HMHF is treated.  We will study convergence rates and efficiency aspect of these methods. The source code for the implementation of all the finite element methods can be found in \cite{nguyen_2025_15481333}.

\subsection{Method for the radially symmetric case} \label{sec:FEM_RSHMHF}
We  propose a very elementary method of lines discretization  for the CHMHF \eqref{eq:HMHF_1DPDE} using  linear or quadratic finite elements in space and BDF1 or BDF2 for time discretization. Optimal order convergence of this method is demonstrated below.
In \cite{HaynesHuangZegeling2013}, a finite difference based scheme with a moving mesh ansatz is developed for CHMHF in the singular regime with blow up of solutions in finite time. We are not aware of other literature in which discretization schemes for CHMHF are systematically studied. We note that due to the strong nonlinearity of CHMHF an error analysis of discretization methods for this problem is not straightforward, cf. the recent work \cite{NguyenReusken,NguyenReuskenFE25}.

Let an initial condition $u_0 \in C^2(\bar I)$, $I:=(0,1)$, with $u_0(0)=0$ and $\|u_0\|_{L^\infty} \leq \pi$ be given. We introduce the solution
space $H^1_{u_0}(I):=\{\, v \in H^1(I)~|~v(0)=0,~v(1)=u_0(1)\, \}$. 

A variational formulation of \eqref{eq:HMHF_1DPDE}-\eqref{eq:HMHF_1DDirichletBoundary} is as follows: find $u \in C^1\big([0,T];H^1_{u_0}(I)\big)$ with $u(0,\cdot)=u_0$ such that for all $v \in H^1_0(I)$
\begin{align}
    \left(\partial_t u, v\right)+ \left(\partial_r u, \partial_r v\right)-\left(\frac{1}{r}\partial_r u, v\right) + \left(\frac{\sin(2u)}{2 r^2},v\right) = 0, \quad t \in (0,T], \label{eq:HMHF_1D_WeakForm}
\end{align}
where $(\cdot,\cdot)$ denotes the $L^2$-scalar product on $I$. Below in the time discretization, we use a linearization of the nonlinear term in \eqref{eq:HMHF_1D_WeakForm} based on
\[
 \frac{\sin(2u)}{2 r^2}=  \left(\frac{\sin{(2u)}}{2 u\,  r^2}\right) u.
\]
For simplicity, we use  an equidistant partitioning  of $\Bar{I}$ into $N$ subintervals of equal length  $h = 1/N$, denoted by $\mathcal{T}_h$. We introduce the finite element spaces of continuous and piecewise polynomial functions of degree less than or equal to $p$:
\begin{align*}
    V_h^p &= \{v \in C^0(\Bar{I}):  v|_K \in \mathcal{P}_p \; \forall K \in \mathcal{T}_h \}, \\
     V^p_{h,u_0} &=  \{v \in V_h^p: v(0) =0, v(1)= u_0(1)\}.
\end{align*}
 In the experiments below we restrict to  $p=1$ and $p=2$. 
\\
Let $\tau$ be a constant time step and $J\in \Bbb{N}$ such that $J \tau = T$.
For BDF1 (implicit Euler) and BDF2, we use the standard compact representation
\begin{align}
    \partial_t u(t_j,\cdot) &\approx \frac{1}{\tau} \left( u^{j}-u^{j-1} \right) =: \frac{1}{\tau} \sum_{i=0}^{1} \delta_i^{(1)} u^{j-i}, \notag \\
    \partial_t u(t_j,\cdot) &\approx \frac{1}{\tau} \left( \frac{3}{2}u^{j}-2u^{j-1}+\frac{1}{2} u^{j-2} \right) =: \frac{1}{\tau} \sum_{i=0}^{2} \delta_i^{(2)} u^{j-i}, \label{eq:HMHF_BDF_Def}
\end{align}
where the superscript $k\in\{1,2\}$ in $\delta_i^{(k)}$ corresponds to BDF$k$.  The extrapolation used in the linearization is given by
\begin{align}
	\begin{split}
		\Hat{u}^{j,(1)}& :=u^{j-1}   \qquad \qquad \text{(BDF1)},  \\
		\Hat{u}^{j,(2)}& :=2u^{j-1}-u^{j-2} \; \text{(BDF2)}.
	\end{split}
   	\label{eq:HMHF_Approx_Extrapolation} 
\end{align}
 Let $\mathcal{I}_h$ be the nodal interpolation operator from $C^0(\bar{I},\mathbb{R})$ to $V_h^p$. We fix $k \in \{1,2\}$ and a timestep $\tau$. 
The discretization is as follows: Given $u_{h}^0 =\mathcal{I}_h( u_0)$ and $u_{h}^1\in V_{h,u_0}^p$ (only for $k=2$), for $j=k,\ldots, J-1$,  find $u_h^{j+1}\in V_{h,u_0}^p$ such that for all $v_h \in V^p_{h,0}$
\begin{align}
    \frac{\delta_0^{(k)}}{\tau} \left(u_h^{j+1},v_h\right) &+ \left(\partial_r u_h^{j+1}, \partial_r v_h\right)-\left(\frac{1}{r}\partial_r u_h^{j+1}, v_h\right) \notag \\
    &+ \left(\frac{\sin{(2\Hat{u}_h^{j+1,(k)})}}{2r^2 \Hat{u}_h^{j+1,(k)}} u_h^{j+1} ,v_h\right) =- \frac{1}{\tau} \left(\sum_{i=1}^k\delta_i^{(k)} u_h^{j-i},v_h\right). \label{eq:HMHF_1D_full_discr}
\end{align}
For $k=2$ the initial value $u_{h}^1$ is determined by applying one step of the BDF1 scheme. 
An error analysis of the  scheme \eqref{eq:HMHF_1D_full_discr} for $k=1$ is given in \cite{NguyenReuskenFE25}. In \cite{NguyenReusken},  an error analysis of a finite difference scheme for \eqref{eq:HMHF_1DPDE} is presented.
\paragraph*{Numerical results} 
We consider the CHMHF for $t \in [0,10^{-1}]$ and with initial condition
\begin{align}
    u_0(r)=\frac{1}{2}\pi r^2.\label{eq:HMHF_1DPDE_InitialCond_RefSol}
\end{align}
Note that this initial condition satisfies $\|u_0(r)\|_{L^\infty} < \pi$ for all $r\in [0,1]$ and thus Theorem \ref{theo:HMHF_1D_Global_Smooth} ensures that there is no blow up and the exact solution is globally smooth. As a reference solution, we take the numerical solution $u_h^{\text{ref}}$ computed using \eqref{eq:HMHF_1D_full_discr} with $h = 2^{-14}$, piecewise quadratic finite elements ($p=2$),  $\tau = 10^{-6}$ and BDF2 ($k=2$).  We compute errors in $u_h^J\approx u(0.1, \cdot)$ in the $L^2$ and $H^1$ norms, i.e.,  $\lVert u_h^J - u_h^{\text{ref}}\rVert_{L^2(I)}$ and $\lVert u_h^J - u_h^{\text{ref}}\rVert_{H^1(I)}$.  Results are presented in the 
Tables~\ref{table:HMHF_1D_FEM_L2_H1_Convergence_tau_bdf1}--\ref{table:HMHF_1D_FEM_L2_H1_Convergence_h_p2}. 
\begin{table}[ht!]
\centering
\begin{tabular}{ |p{2.4cm}|| p{2.65cm}| p{1cm}|| p{2.65cm}|p{1cm}|}
 \hline
  $h = 2^{-14}$ & $\lVert u_h^{J} - u^{\text{ref},J}_h \rVert_{L^2(I)}$ & EOC & $\lVert u_h^{J} - u^{\text{ref},J}_h \rVert_{H^1(I)}$ & EOC\\
 \hline
 $\tau=5\cdot 10^{-2}$  & $ 4.2056e-02$  &  $-$ & $1.3876e-01$ & $-$  \\
 $\tau=2.5 \cdot 10^{-2}$  & $2.3299e-02$ & $0.85$  & $7.6855e-02$ & $0.85$ \\
 $\tau=1.25 \cdot 10^{-2}$ & $1.2331e-02$ & $0.92$ & $4.0678e-02$ & $0.92$\\
 $\tau=6.25 \cdot 10^{-3}$ & $6.3542e-03$ & $0.96$ & $2.0964e-02$ & $0.96$\\
$\tau = 3.125 \cdot 10^{-3}$ & $3.2269e-03$ & $0.98$  & $1.0647e-02$& $0.98$\\
 \hline 
\end{tabular}
\caption{Error for \eqref{eq:HMHF_1D_full_discr} with BDF1 and $V_h^2$; $h= 2^{-14}$ fixed. }
\label{table:HMHF_1D_FEM_L2_H1_Convergence_tau_bdf1}
\end{table}

\begin{table}[ht!]
\centering
\begin{tabular}{ |p{2.4cm}|| p{2.65cm}| p{1cm}|| p{2.65cm}|p{1cm}|}
 \hline
  $h = 2^{-14}$& $\lVert u_h^{J} - u^{\text{ref},J}_h \rVert_{L^2(I)}$ & EOC & $\lVert u_h^{J} - u^{\text{ref},J}_h \rVert_{H^1(I)}$ & EOC\\
 \hline
 $\tau=5\cdot 10^{-2}$  & $ 2.5176e-02$  &  $-$ & $8.3363e-02$ & $-$  \\
 $\tau=2.5 \cdot 10^{-2}$  & $5.6626e-03$ & $2.15$  & $1.9086e-02$ & $2.13$ \\
 $\tau=1.25 \cdot 10^{-2}$ & $1.0739e-03$ & $2.40$ & $3.6513e-03$ & $2.39$\\
 $\tau=6.25 \cdot 10^{-3}$ & $2.3314e-04$ & $2.20$ & $7.8868e-04$ & $2.21$\\
$\tau = 3.125 \cdot 10^{-3}$ & $5.5458e-05$ & $2.07$  & $1.8704e-04$& $2.08$\\
 \hline 
\end{tabular}
\caption{Error for \eqref{eq:HMHF_1D_full_discr} with BDF2  and $V_h^2$; $h= 2^{-14}$ fixed.}
\label{table:HMHF_1D_FEM_L2_H1_Convergence_tau_bdf2}
\end{table}

The results in Tables~\ref{table:HMHF_1D_FEM_L2_H1_Convergence_tau_bdf1} and \ref{table:HMHF_1D_FEM_L2_H1_Convergence_tau_bdf2}  show that the scheme \eqref{eq:HMHF_1D_full_discr} has optimal order of convergence  in the time step $\tau$ for BDF1 and BDF2. 
\begin{table}[ht!]
\centering
\begin{tabular}{ |p{2.4cm}|| p{2.65cm}| p{1cm}|| p{2.65cm}|p{1cm}|}
 \hline
  $\tau = 10^{-6}$& $\lVert u_h^{J} - u^{\text{ref},J}_h \rVert_{L^2(I)}$ & EOC & $\lVert u_h^{J} - u^{\text{ref},J}_h \rVert_{H^1(I)}$ & EOC
 \\
 \hline
 $h = 2^{-3}$  & $ 1.0893e-03$  &  $-$ & $2.4522e-02$ & $-$ \\
 $h = 2^{-4}$  & $2.7888e-04$ & $1.97$ & $1.2328e-02$ & $0.99$ \\
 $h = 2^{-5}$ & $6.9802e-05$ & $2.00$ & $6.1812e-03$ & $1.00$ \\
 $h = 2^{-6}$ & $1.7100e-05$ & $2.03$ & $3.0995e-03$& $1.00$\\
$h = 2^{-7}$ & $3.9710e-06$ & $2.11$ & $1.5422e-03$ & $1.01$\\
 \hline 
\end{tabular}
\caption{Error for \eqref{eq:HMHF_1D_full_discr} with $V_h^1$ and BDF2; $\tau = 10^{-6}$ fixed.}
\label{table:HMHF_1D_FEM_L2_H1_Convergence_h_p1}
\end{table}
\begin{table}[ht!]
\centering
\begin{tabular}{ |p{2.4cm}|| p{2.65cm}| p{1cm}|| p{2.65cm}|p{1cm}|}
 \hline
   $\tau = 10^{-6}$& $\lVert u_h^{J} - u^{\text{ref},J}_h \rVert_{L^2(I)}$ & EOC & $\lVert u_h^{J} - u^{\text{ref},J}_h \rVert_{H^1(I)}$ & EOC
 \\
 \hline
 $h = 2^{-3}$  & $ 3.5116e-05$  &  $-$ & $2.0124e-03$ & $-$ \\
 $h = 2^{-4}$  & $4.1838e-06$ & $3.07$ & $5.0027e-04$ & $2.01$ \\
 $h = 2^{-5}$ & $5.1173e-07$ & $3.03$ & $1.2469e-04$ & $2.00$ \\
 $h = 2^{-6}$ & $6.3310e-08$ & $3.01$ & $3.1125e-05$& $2.00$\\
$h = 2^{-7}$ & $7.8741e-09$ & $3.01$ & $7.7756e-06$ & $2.00$\\
 \hline 
\end{tabular}
\caption{Error for \eqref{eq:HMHF_1D_full_discr} with $V_h^2$ and  BDF2; $\tau = 10^{-6}$ fixed.}
\label{table:HMHF_1D_FEM_L2_H1_Convergence_h_p2}
\end{table}

In Tables~\ref{table:HMHF_1D_FEM_L2_H1_Convergence_h_p1} and \ref{table:HMHF_1D_FEM_L2_H1_Convergence_h_p2} we observe optimal order convergence rates  with respect to $h$, in $L^2$- and $H^1$-norms, for  $p=1$ and $p=2$. Below the scheme \eqref{eq:HMHF_1D_full_discr} is used to determine a highly accurate reference solution for the HMHF problem \eqref{HMHFeq}, cf. Remark~\ref{rem:RSHMHF_SphericalTransf}. 

\subsection{Methods for the harmonic map heat flow} \label{sec:FEM_HMHF}
In this section, we study discretization methods for the  HMHF problem \eqref{HMHFeq}. They are based on the works of Prohl \cite[Section 4.3.2]{prohl2001}, Akrivis \emph{et al. }\cite{KovacsLuebich} and Bartels-Prohl \cite{BartelsProhl2007} and differ in how the unit length constraint of HMHF is treated.  In  \cite[Section 4.3.2]{prohl2001}, the unit length constraint is weakly enforced by normalizing the numerical solution at every mesh point after each time step. A different way, proposed in \cite{BartelsProhl2007}, is to use  the double cross product reformulation of HMHF, which  automatically preserves the unit length of the initial condition, without using an additional unit length constraint.  The method studied in \cite{KovacsLuebich} is based on the approach introduced in \cite{AlougesJaisson06,Alouges2008FEMLL} and exploits the fact that preserving the unit length constraint can be formulated as the time derivative of the solution being orthogonal to the solution itself. Motivated by this, one constructs an approximate tangent space to the sphere as the solution space for the time derivative to enforce the unit length.  This idea of solving for the approximate time derivative in the finite element discretization goes back to  \cite{AlougesJaisson06,Alouges2008FEMLL} where a pointwise orthogonality in the mesh points is used to define a discrete tangent space. In  \cite{KovacsLuebich}, an $L^2$-projection of the orthogonality condition into the finite element space is used.

Since the domain $D$ has a curved boundary, for the case of a quadratic finite element space $V_h^2$ we will use isoparametric elements, which is essential for obtaining  optimal order of convergence, cf. \cite{LenoirIsoFE86}. We denote by ${\mathcal{T}}_h$ a regular and quasi-uniform  triangulation of ${D}$ with maximum mesh parameter $h$. Note that ${\mathcal{T}}_h$ is not related to the interval partitioning (also denoted by ${\mathcal{T}}_h$) used in Section~\ref{sec:FEM_RSHMHF} above. The vertices of the boundary triangles are assumed to lie on $\partial D$.  The set of vertices in $\mathcal{T}_h$ is denoted by  $\mathbf{\mathcal{N}}_{{h}}$.
We introduce finite element spaces of continuous  piecewise polynomial functions of maximal degree $p$:
\begin{align}
    V_{h}^p &= \{v \in C(D):  v|_K \in \mathcal{P}_p \; \forall K \in {\mathcal{T}}_h \}, \label{eq:HMHF2D_fes_1} \\
    \mathbf{V}_{h}^p &= [V^p_{{h}}]^3, \label{eq:HMHF2D_fes_2}  \\
     \mathbf{V}^p_{{h},\mathbf{u}_0} &= \{ \mathbf{v} \in \mathbf{{V}}^p_{{h}} : \mathbf{v}(z) = \mathbf{u}_0(z) ~~\forall ~z \in \mathbf{\mathcal{N}}_{{h}} \cap \partial D\, \}. \label{eq:HMHF2D_fes_3}
\end{align}
In the numerical experiments, we restrict to $p=1$ and $p=2$. For $p=2$ the isoparametric variants of these spaces are used.
\begin{remark} \label{rem:RSHMHF_SphericalTransf} \rm 
We define the  mapping $F:C(I) \rightarrow C(D;\mathbb{R}^3)$ by
\begin{align*}
    F(u)(x,y) = \begin{pmatrix}  x/r \sin{ u(r) } \\  y/r \sin{ u(r) } \\ \cos{u(r)}\end{pmatrix}, \quad r = \sqrt{x^2+y^2}, \quad (x,y) \in D.
\end{align*}
This mapping is used to map a reference solution $u_h^{\text{ref}}$, cf. Section~\ref{sec:FEM_RSHMHF}, to a corresponding $\mathbf{u}^{\text{ref}}_{h}$ for HMHF as follows. Given $u_h^{\text{ref}}$ at a fixed time $t$ we compute
\begin{align}
    \mathbf{u}^{\text{ref}}_{h} = \mathcal{I}_{h}\label{eq:RSHMHF_SphericalTransformation} \left(F\left(u_h^{\text{ref}}\right)\right)
\end{align}
where $ \mathcal{I}_{h} $ is the nodal interpolation operator from $C^0(\Bar{D};\mathbb{R}^3)$ to $\mathbf{V}_{h}^p$.
\end{remark}
\subsubsection{Treatment of unit length constraint by pointwise projection} \label{sec:HMHF_PPFEM}
We recall the method from \cite[Section 4.3.2]{prohl2001}. A variational formulation of \eqref{HMHFeq} reads as follows: Given $\mathbf{u}_0 \in H^1({D};\mathbb{R}^3) $ with $\left|\mathbf{u}_0\right|=1$ (a.e),  find $\mathbf{u} \in C^1\big([0,T]; H^1({D};\mathbb{R}^3)\big)$ with $\mathbf{u}(0,\cdot)=\mathbf{u}_0$ and $\mathbf{u}_{|\partial D}=(\mathbf{u}_0)_{|\partial D}$ such that for all $\mathbf{v} \in H^1_0(D;\mathbb{R}^3)$
\begin{align*}
    &\left(\partial_t \mathbf{u}, \mathbf{v}\right) + \left(\nabla \mathbf{u},\nabla \mathbf{v} \right) - \left(\left| \nabla \mathbf{u} \right|^2 \mathbf{u}, \mathbf{v} \right) = 0, \quad t \in (0,T], \\
    &|\mathbf{u}| = 1 \text{ a.e. in } [0,T] \times {D}, 
\end{align*}
where $(\cdot,\cdot)$ denotes the $L^2$-scalar product on $D$. Let the time step  $\tau$ and $J \in  \Bbb{N}$ be such that $J \tau = T$. Based on this variational formulation we obtain the following discrete problem, cf. \cite[Section 4.3.2]{prohl2001}, with given $k \in\{1,2\}$ and time step $\tau$: Given $\mathbf{u}_{h}^{0}=\mathcal{I}_{h}(\mathbf{u}_0)$ and $\mathbf{u}_{h}^{1} \in \bV_{h,\bu_0}^p$ (only for $k=2$), for $j=k-1, \ldots, J-1$,  find ${\mathbf{u}}_{h}^{j+1} \in \mathbf{{V}}^p_{{h},\mathbf{u}_0}$ such that for all $\mathbf{v}_h \in \mathbf{V}^p_{{h},0}$:
\begin{align}
    \frac{\delta_0^{(k)}}{\tau}\left( \Tilde{\mathbf{u}}^{j+1}_{h}, \mathbf{v}_{h} \right) + \left(\nabla \Tilde{\mathbf{u}}^{j+1}_{h}, \nabla \mathbf{v}_{h}\right) - \left(\left| \nabla \Hat{\mathbf{u}}^{j+1,(k)}_{h} \right|^2 \Tilde{\mathbf{u}}^{j+1}_{h}, \mathbf{v}_{h}\right) \notag\\
    = -\frac{1}{\tau}\sum_{i=1}^k\left(\delta_i^{(k)} \mathbf{u}^{j+1-i}_{h}, \mathbf{v}_{h} \right)\; ,\label{eq:HMHF_PPFE}
    \\
     \mathbf{u}_{h}^{j+1}(z) = \frac{\Tilde{\mathbf{u}}^{j+1}_{h}(z)}{\left| \Tilde{\mathbf{u}}^{j+1}_{h}(z)\right|} \quad \text{for all}~ z \in \mathbf{\mathcal{N}}_{h},\label{eq:HMHF_PPFE_normalization}
\end{align}
where $\Hat{\mathbf{u}}^{j+1,(k)}_{h}$ is the extrapolation given by the formula \eqref{eq:HMHF_Approx_Extrapolation} and $\delta_i^{(k)}$ are the coefficients of the time stepping scheme \eqref{eq:HMHF_BDF_Def}. 
For $k=2$, the initial function $\mathbf{u}_{h}^{1}$ is determined by applying one step of the BDF1 ($k=1$) variant of the scheme \eqref{eq:HMHF_PPFE}-\eqref{eq:HMHF_PPFE_normalization}. Note that the norm constraint is enforced by an additional pointwise normalization step. 
We will call the scheme \eqref{eq:HMHF_PPFE}-\eqref{eq:HMHF_PPFE_normalization} the pointwise projection finite element method (PPFEM) for the harmonic map heat flow. 
\begin{remark} \rm 
Under the assumption that the solution is sufficiently smooth and a parameter constraint $\tau^{-1/2}= o(h^{-1})$ is satisfied, it is shown in \cite[Theorem 4.11]{prohl2001}  that a variant of this scheme, modified for the Landau-Lifschitz-Gilbert equation, converges with first order in $\tau$ for BDF1 $(k=1)$. The author also proves  for linear finite elements first order convergence in $h$ in the $H^1$-norm. Recently, in \cite{Gui22} a slight modification of PPFEM is proposed, using a lumped mass finite element method. For this  lumped mass PPFEM it is proved  that on  rectangular meshes and under the assumption $h^{p+1} = \mathcal{O}(\tau)$, one obtains optimal convergence order in $h$ and in $\tau$ in the $L^2$-norm,  using BDF1 for time discretization and linear or higher order conforming finite element spaces. In the case of triangular meshes, one order in space is lost.  In \cite{Bai2025} the method is extended to more general harmonic map heat flows from a domain $\Omega \subset \Bbb{R}^d$ into a closed smooth  hypersurface in $\Bbb{R}^n$. Furthermore, in that paper  a higher order BDF  stepping scheme is used for time discrezation $(k=2,\ldots,5)$. On rectangular domains, using $H^1$-conforming  tensor product finite element spaces, they show optimal convergence order $\mathcal{O}(h^{p+1} + \tau^k)$ in $L^2$ under the condition $h^{p+1} = \mathcal{O}(\tau^k)$. For simplicial finite element spaces on polygonal domains one order in space is lost,  as in \cite{Gui22}. Both papers also provide numerical results for the convergence behavior using a two dimensional rectangular domain (hence, no spherical symmetry). The lumped mass technique is also used in the method treated in \cite{BartelsProhl2007}, cf. Section \ref{sec:HMHF_Bartels_Method} below. In \cite{RongSun2021}, the discretization method described here is analyzed for LLG and they derive, for $p \geq 2$, optimal convergence order $\mathcal{O}(\tau + h^{p+1})$ in the $L^2$-norm under the time step condition  $\tau = \mathcal{O}(h)$.
\end{remark} 
\paragraph*{Numerical results}
We consider the HMHF problem \eqref{HMHFeq} for $t \in [0,10^{-1}]$ and with initial condition
\begin{equation}
   \mathbf{u}_0(r,\psi) =
   \begin{pmatrix}  \cos{\psi} \sin{ u_0(r) } \\  \sin{\psi} \sin{ u_0(r) } \\ \cos{u_0(r)}\end{pmatrix},   \label{eq:HMHF_InitialCondSphericallySymmetric_example}
\end{equation}
where $(r,\psi)$ are the polar coordinates on the disk and $u_0$ as given by \eqref{eq:HMHF_1DPDE_InitialCond_RefSol}. 
The numerical solution $\mathbf{u}_{h}^j, j=0,\dots, J$, is determined by PPFEM \eqref{eq:HMHF_PPFE}-\eqref{eq:HMHF_PPFE_normalization}.
For computing errors we use the reference solution  $u_h^{\text{ref}}$ computed using the method explained in Section~\ref{sec:FEM_RSHMHF} (with $h = 2^{-14}$, $V_h^2$, BDF2 with $\tau = 10^{-6}$). The reference solution $\mathbf{u}_{h}^{\text{ref}}$ is constructed from  $u_h^{\text{ref}}$ by \eqref{eq:RSHMHF_SphericalTransformation}. We compute the errors
in $\mathbf{u}_h^J \approx \mathbf{u}(0.1,\cdot)$ in the $L^2$ and $H^1$ norms, i.e., $\lVert \mathbf{u}_h^J - \mathbf{u}_h^{\text{ref}}\rVert_{L^2}$ and $\lVert \mathbf{u}_h^J - \mathbf{u}_h^{\text{ref}}\rVert_{H^1}$. \\
Results are presented in the Tables~\ref{table:HMHF_2D_PPFEM_L2_H1_Convergence_tau_bdf1}--\ref{table:HMHF_2D_PPFEM_L2_H1_Convergence_h_p2}. In the Tables~~\ref{table:HMHF_2D_PPFEM_L2_H1_Convergence_tau_bdf1}--\ref{table:HMHF_2D_PPFEM_L2_H1_Convergence_tau_bdf2} we consider quadratic finite elements ($V_h^2$) with a fixed sufficiently small $h$ and vary $\tau$ to study the accuracy of the time discretization scheme.
\begin{table}[ht!]
\centering
\begin{tabular}{ |p{2.5cm}|| p{2.65cm}| p{1cm}|| p{2.65cm}|p{1cm}|}
 \hline
  $h=2^{-6}$ & $\lVert \mathbf{u}_{h}^{J} - \mathbf{u}^{\text{ref},J}_{h} \rVert_{L^2}$ & EOC & $\lVert \mathbf{u}_{h}^{J} - \mathbf{u}^{\text{ref},J}_{h}  \rVert_{H^1}$ & EOC\\
 \hline
 $\tau=5 \cdot 10^{-2}$  & $3.4910e-02$ & $-$  & $1.4780e-01$ & $-$ \\ 
 $\tau=2.5 \cdot 10^{-2}$  & $2.3253e-02$ & $0.59$  & $9.6176e-02$ & $0.62$ \\
 $\tau=1.25 \cdot 10^{-2}$ & $1.3985e-02$ & $0.73$ & $5.7423e-02$ & $0.74$\\
 $\tau=6.25 \cdot 10^{-3}$ & $7.8473e-03$ & $0.83$ & $3.2146e-02$ & $0.84$\\
$\tau=3.125 \cdot 10^{-3}$ & $4.2028e-03$ & $0.90$  & $1.7204e-02$& $0.90$ \\
$\tau=1.5625 \cdot 10^{-3}$ & $2.1849e-03$ & $0.94$ & $8.9428e-03$ & $0.94$\\
$\tau=7.8125 \cdot 10^{-4}$ & $1.1158e-03$ & $0.97$ & $4.5677e-03$ & $0.97$\\
 \hline 
\end{tabular}
\caption{Error for PPFEM \eqref{eq:HMHF_PPFE}-\eqref{eq:HMHF_PPFE_normalization} with BDF1 and $\mathbf{{V}}_{h}^2$; $h=2^{-6}$ fixed. }
\label{table:HMHF_2D_PPFEM_L2_H1_Convergence_tau_bdf1}
\end{table}
\begin{table}[ht!]
\centering
\begin{tabular}{ |p{2.5cm}|| p{2.65cm}| p{1cm}|| p{2.65cm}|p{1cm}|}
 \hline
  $h=2^{-6}$ & $\lVert \mathbf{u}_{h}^{J} - \mathbf{u}^{\text{ref},J}_{h} \rVert_{L^2}$ & EOC & $\lVert \mathbf{u}_{h}^{J} - \mathbf{u}^{\text{ref},J}_{h}  \rVert_{H^1}$ & EOC\\
 \hline
  $\tau=5 \cdot 10^{-2}$  & $2.5378e-02$ & $-$  & $1.0438e-01$ & $-$ \\
 $\tau=2.5 \cdot 10^{-2}$  & $7.2493e-03$ & $1.81$  & $2.9959e-02$ & $1.80$ \\
 $\tau=1.25 \cdot 10^{-2}$ & $1.6907e-03$ & $2.10$ & $6.9553e-03$ & $2.11$\\
 $\tau=6.25 \cdot 10^{-3}$ & $4.1608e-04$ & $2.02$ & $1.7098e-03$ & $2.02$\\
$\tau=3.125 \cdot 10^{-3}$ & $1.0539e-04$ & $1.98$  & $4.3477e-04$ & $1.98$ \\
 \hline 
\end{tabular}
\caption{Error for PPFEM \eqref{eq:HMHF_PPFE}-\eqref{eq:HMHF_PPFE_normalization} with BDF2 and $\mathbf{{V}}_{h}^2$; $h=2^{-6}$ fixed. }
\label{table:HMHF_2D_PPFEM_L2_H1_Convergence_tau_bdf2}
\end{table}
Table~\ref{table:HMHF_2D_PPFEM_L2_H1_Convergence_tau_bdf1} shows that for a small enough time step we reach a rate of convergence  with optimal order 1 for BDF1 with respect to the time step $\tau$ in $L^2$ and $H^1$ norms.  The results in Table~\ref{table:HMHF_2D_PPFEM_L2_H1_Convergence_tau_bdf2} show  the optimal convergence rate of order 2 for BDF2 with respect to $\tau$ in $L^2$ and $H^1$ norms. \\
In the Tables~~\ref{table:HMHF_2D_PPFEM_L2_H1_Convergence_h_p1}--\ref{table:HMHF_2D_PPFEM_L2_H1_Convergence_h_p2} we consider BDF2 with  a fixed sufficiently small $\tau$ and vary $h$ to study the accuracy of the finite element discretization.
\begin{table}[ht!]
\centering
\begin{tabular}{ |p{2.5cm}|| p{2.65cm}| p{1cm}|| p{2.65cm}|p{1cm}|}
 \hline
  $\tau = 10^{-6}$ & $\lVert \mathbf{u}_{h}^{J} - \mathbf{u}^{\text{ref},J}_{h} \rVert_{L^2}$ & EOC & $\lVert \mathbf{u}_{h}^{J} - \mathbf{u}^{\text{ref},J}_{h}  \rVert_{H^1}$ & EOC\\
 \hline
  $h=2^{-2}$  & $3.2225e-02$ & $-$  & $1.8987e-01$ & $-$ \\
 $h=2^{-3}$  & $8.1866e-03$ & $1.98$  & $8.5117e-02$ & $1.16$ \\
 $h=2^{-4}$ & $1.8507e-03$ & $2.15$ & $3.4665e-02$ & $1.30$\\
 $h=2^{-5}$ & $4.2263e-04$ & $2.13$ & $1.2632e-02$ & $1.45$\\
$h=2^{-6}$ & $1.0192e-04$ & $2.05$  & $4.5444e-03$ & $1.47$ \\
 \hline 
\end{tabular}
\caption{Error for PPFEM \eqref{eq:HMHF_PPFE}-\eqref{eq:HMHF_PPFE_normalization} with $\mathbf{{V}}_{h}^1$ and BDF2; $\tau= 10^{-6}$ fixed.}
\label{table:HMHF_2D_PPFEM_L2_H1_Convergence_h_p1}
\end{table}
\begin{table}[ht!]
\centering
\begin{tabular}{ |p{2.5cm}|| p{2.65cm}| p{1cm}|| p{2.65cm}|p{1cm}|}
 \hline
  $\tau = 10^{-6}$ & $\lVert \mathbf{u}_{h}^{J} - \mathbf{u}^{\text{ref},J}_{h} \rVert_{L^2(I)}$ & EOC & $\lVert \mathbf{u}_{h}^{J} - \mathbf{u}^{\text{ref},J}_{h}  \rVert_{H^1(I)}$ & EOC\\
 \hline
 $h=2^{-2}$  & $8.2075e-02$ & $-$  & $3.4101e-01$ & $-$ \\
 $h=2^{-3}$ & $3.6998e-02$ & $1.15$ & $1.5203e-01$ & $1.17$\\
 $h=2^{-4}$ & $2.5116e-03$ & $3.88$ & $1.0399e-02$ & $3.87$\\
 $h=2^{-5}$ & $1.6563e-04$ & $3.92$  & $6.9987e-04$ & $3.89$ \\
 $h=2^{-6}$ & $1.1221e-05$ & $3.88$  & $4.8919e-05$ & $3.84$ \\
 \hline 
\end{tabular}
\caption{Error for PPFEM \eqref{eq:HMHF_PPFE}-\eqref{eq:HMHF_PPFE_normalization} with $\mathbf{{V}}_{h}^2$ and BDF2; $ \tau= 10^{-6}$ fixed. }
\label{table:HMHF_2D_PPFEM_L2_H1_Convergence_h_p2}
\end{table}

\begin{table}[ht!]
	\centering
	\begin{tabular}{ |p{2.5cm}|| p{2.65cm}| p{1cm}|| p{2.65cm}|p{1cm}|}
		\hline
		$\tau = 10^{-6}$ & $\lVert \mathbf{u}_{h}^{J} - \mathbf{u}^{\text{ref},J}_{h} \rVert_{L^2(I)}$ & EOC & $\lVert \mathbf{u}_{h}^{J} - \mathbf{u}^{\text{ref},J}_{h}  \rVert_{H^1(I)}$ & EOC\\
		\hline
		$h=2^{-2}$  & $8.8618e-02$ & $-$  & $4.2909e-01$ & $-$ \\
		$h=2^{-3}$ & $8.8069e-02$ & $0.14$ & $2.8156e-01$ & $0.61$\\
		$h=2^{-4}$ & $9.6222e-03$ & $3.07$ & $3.4468e-02$ & $3.03$\\
		$h=2^{-5}$ & $7.0798e-04$ & $3.76$  & $2.7698e-03$ & $3.64$ \\
		$h=2^{-6}$ & $7.5579e-05$ & $3.23$  & $3.1968e-04$ & $3.12$ \\
		\hline 
	\end{tabular}
	\caption{ Error for PPFEM \eqref{eq:HMHF_PPFE}-\eqref{eq:HMHF_PPFE_normalization} with $\mathbf{{V}}_{h}^2$ and BDF2 and initial condition $u_0(r)=0.5\pi\left(\sin(2\pi r)+r\right)$; $ \tau= 10^{-6}$ fixed }
	\label{table:HMHF_2D_PPFEM_L2_H1_Convergence_h_p2_experiment_2}
\end{table}

Table~\ref{table:HMHF_2D_PPFEM_L2_H1_Convergence_h_p1} shows optimal order of convergence in the $L^2$ norm with respect to $h$, while convergence rate in the $H^1$ norm is slightly above the optimal rate of 1 for the piecewise linear finite element space.
Table~\ref{table:HMHF_2D_PPFEM_L2_H1_Convergence_h_p2} shows a significantly higher convergence rate than expected for quadratic finite elements. Furthermore,  the error reductions  in the $H^1$-norm and in the $L^2$-norm are approximately the same. Repeating the numerical experiment with a different initial condition $u_0(r)=0.5\pi\left(\sin(2\pi r)+r\right)$ we observe an $L^2$ norm convergence with an order closer to the optimal order 3, cf.   Table~\ref{table:HMHF_2D_PPFEM_L2_H1_Convergence_h_p2_experiment_2} (third column). However, also in this case we  observe a (unexpected) similar reduction rate in the $H^1$ norm. We have no satisfactory explanation for these observations. 

\subsubsection{Treatment of unit length constraint using a tangent space approach}
From 
\begin{align*}
  \frac{1}{2} \partial_t \left(\left| \mathbf{u} \right|^2\right) = \partial_t\mathbf{u} \cdot \mathbf{u} 
\end{align*}
it follows that  if the initial condition $\mathbf{u}(0,\cdot)=\mathbf{u}_0$ satisfies $|\mathbf{u}_0|=1$ on $D$ then the solution $\mathbf{u}$ satisfies the unit length constraint $|\mathbf{u}(t,\cdot)|=1$ on $D$ for $t \in [0,T]$ iff $\partial_t\mathbf{u} \cdot \mathbf{u} =0$ on $[0,T] \times D$. 
In  \cite{Alouges2008FEMLL,AlougesJaisson06,KovacsLuebich}, this elementary observation is used  to construct a discretization method for the Landau-Lifshitz-Gilbert equation. In \cite{Alouges2008FEMLL,AlougesJaisson06}, a discrete tangent space is constructed where the orthogonality is satisfied at each mesh point. Here, we apply the method studied in \cite{KovacsLuebich} to  the harmonic map heat flow problem, where an $L^2$ projection of the orthogonality condition onto the finite element space is used, cf. \eqref{constraint}.
\\
Let
\begin{align}
   \mathbf{T}(\mathbf{u}) := \{ \mathbf{v} \in L^2({D};\mathbb{R}^3)] : \mathbf{u} \cdot \mathbf{v} = 0 \text{ a.e.} \}  
\end{align}
be the tangent space at some $\mathbf{u}\in H^1({D},\mathbb{R}^3)$. Note that if $\mathbf{u}$ is the solution of \eqref{HMHFeq}, then for $\mathbf{v} \in \mathbf{T}(\mathbf{u})$ we have $|\nabla \mathbf{u}|^2 \mathbf{u}\cdot\mathbf{v}=0$. Using this and the elementary observation above leads to the following variational formulation.  Given $\mathbf{u}_0 \in H^1({D};\mathbb{R}^3) $ with $\left|\mathbf{u}_0\right|=1$ on $D$,  find $\mathbf{u} \in C^1\big([0,T];H^1({D},\mathbb{R}^3)\big)$  such that $\mathbf{u}(0,\cdot)=\mathbf{u}_0$ and  a time derivative  $
    \partial_t \mathbf{u}(t,\cdot) \in \mathbf{T}(\mathbf{u}(t,\cdot))
$ 
such that $(\partial_t \mathbf{u})_{|\partial D}=0$ and satisfies for all $\mathbf{v} \in \mathbf{T}(\mathbf{u}(t,\cdot))\cap H^1_0({D},\mathbb{R}^3)$
\begin{equation*}
    \left(\partial_t \mathbf{u}(t,\cdot), \mathbf{v} \right) + \left(\nabla \mathbf{u}(t,\cdot), \nabla \mathbf{v} \right) = 0, \quad t \in (0,T],
\end{equation*}
where $(\cdot,\cdot)$ is again the $L^2$ inner product on $D$.
In  the discretization, we use a discretized tangent space where the orthogonality is reduced to orthogonality in the finite element space as follows: 
\begin{align} \label{constraint}
    \mathbf{T}_{h,0}^p(\mathbf{u}) := \left\{ \mathbf{v}_{h} \in \mathbf{{V}}_{h,0}^p: \mathbf{b}(\mathbf{u};\mathbf{v}_{h},w_{h}) :=\left(\mathbf{u}\cdot \mathbf{v}_{h}, w_{h} \right) = 0 \; \forall w_{h} \in {V}_{h,0}^p \right\}.
\end{align}
We can write the approximation of the time derivative $\Dot{\mathbf{u}}_{h}^{j} \approx \partial_t \mathbf{u}(t_j,\cdot)$ by using \eqref{eq:HMHF_BDF_Def} for a given $k\in\{1,2\}$ as
\begin{align}
    \Dot{\mathbf{u}}_{h}^{j} = \frac{1}{\tau} \sum_{i=0}^{k} \delta_i^{(k)} \mathbf{u}_{h}^{j-i}. \label{eq:HMHF_Vector_BDF}
\end{align}
Since the tangent space depends itself on the solution of the current time step, we extrapolate the solution from previous time steps to construct the discrete tangent space. The extrapolation is given by, cf. \eqref{eq:HMHF_Approx_Extrapolation},
\begin{equation} \label{extrapol} \begin{split}
    \Hat{\mathbf{u}}^{j,(1)}_{h} &= \frac{ \mathbf{u}_{h}^{j-1}}{\left|\mathbf{u}_{h}^{j-1}\right|}, \\
    \Hat{\mathbf{u}}^{j,(2)}_{h} &= \frac{2 \mathbf{u}_{h}^{j-1}- \mathbf{u}_{h}^{j-2}}{\left|2\mathbf{u}_{h}^{j-1}-\mathbf{u}_{h}^{j-2}\right|}.
\end{split}
\end{equation}
Thus, we obtain the following scheme, cf. \cite[Equation (2.6)]{KovacsLuebich}. Let the time step $\tau$ and $J \in \Bbb{N}$ be such that $J \tau=T$.  Given $\tau >0$ and $p, k\in\{1,2\}$, $\mathbf{u}_{h}^{0}=\mathcal{I}_{h}(\mathbf{u}_0)$ and $\bu_h^1 \in \bV_{h, \bu_0}^p$ (only for $k=2$), for $j=k-1, \ldots, J-1$, find the approximate time derivative $\Dot{\mathbf{u}}_{h}^{j+1} \in \mathbf{T}^ p_{h,0}\left(\Hat{\mathbf{u}}_{h}^{j+1,(k)}\right)$ such that  for all $\mathbf{v}_{h} \in \mathbf{T}^p_{{h,0}}\left(\Hat{\mathbf{u}}_{h}^{j+1,(k)}\right)$
\begin{align}
    \left(\Dot{\mathbf{u}}_{h}^{j+1},\mathbf{v}_{h}\right) + \frac{\tau}{\delta^{(k)}_0} \left(\nabla \Dot{\mathbf{u}}_{h}^{j+1}, \nabla \mathbf{v}_{h} \right) = \sum_{i=1}^k \frac{\delta_i^{(k)}}{\delta_0^{(k)}} \left(\nabla \mathbf{u}_{h}^{j+1-i}, \nabla \mathbf{v}_{h} \right), \label{eq:HMHF_TFEM}
\end{align}
and the update step is given by
\begin{align}
    \mathbf{u}_{h}^{j+1} = \frac{\tau}{\delta_0^{(k)}} \Dot{\mathbf{u}}_{h}^{j+1} -\sum_{i=1}^k \frac{\delta_i^{(k)}}{\delta_0^{(k)}} \mathbf{u}_{h}^{j+1-i}, \label{eq:HMHF_TFEM_update}
\end{align}
which follows directly from \eqref{eq:HMHF_Vector_BDF}. For $k=2$ the initial function $\mathbf{u}_{h}^{1}$ is determined by applying one step of the BDF1  variant of the scheme. The scheme \eqref{eq:HMHF_TFEM}-\eqref{eq:HMHF_TFEM_update} is analyzed in \cite{KovacsLuebich}.

\begin{remark} \rm 
    In \cite[Theorem 3.1]{KovacsLuebich}, for  a variant of this scheme, modified for the LLG equation with Neumann boundary condition, an error bound as in \eqref{boundLubich} is derived (under a mild CFL-type condition) for BDF$k$ with $k\in \{1,2\}$. For  BDF$k$ with $3 \leq k \leq 5$, optimal $H^1$ norm error bounds of order  $\tau^k + h^p$ are derived for  $p \geq 2$, using a much stronger CFL type condition $\tau = \mathcal{O}(h)$ and with an additional assumption concerning the size of the damping parameter in the LLG equation, cf.~ \cite[Theorem 3.2]{KovacsLuebich}. The paper also contains results of numerical results with exact solutions for the LLG equation on a three-dimensional cube with a thin layer in $z$-direction to confirm the theoretical discretization error bounds.
\end{remark}
\ \\

Concerning implementation of this scheme we note the following. One can determine a basis of the finite element space $\mathbf{T}_{h,0}^p(\mathbf{u})$ by solving suitable local problems, cf. \cite[Section 2.2]{KovacsLuebich}. From  an implementation point of view this is inconvenient. Furthermore, it is not a-priori clear what the conditioning properties of this basis are.  An alternative is  a Lagrange multiplier approach, as also proposed in \cite[Section 2.2]{KovacsLuebich}. We introduce a Lagrange multiplier $\lambda_h \in V_{h,0}^p$ to treat  the constraint $\mathbf{u}_h \cdot \mathbf{v}_h=0$ in \eqref{constraint}. 
This leads to the following saddle point problem:
Given a time step $\tau >0$ (with $\tau J=T$) and $p, k \in \{1,2\}$, $\mathbf{u}_{h}^{0}=\mathcal{I}_{h}(\mathbf{u}_0)$, $\mathbf{u}_{h}^{1}\in \bV_{h, \bu_0}^p$ (only for $k=2$), for $j=k-1, \ldots, J-1$, find $\left(\Dot{\mathbf{u}}_{h}^{j+1}, \lambda_{h} \right) \in \left(\mathbf{{V}}_{h,0}^p,{V}_{h,0}^p\right)$ such that for all $\left(\mathbf{v}_{h},w_{h} \right) \in \left(\mathbf{{V}}_{h,0}^p, {V}_{h,0}^p \right) $
\begin{align}
    \left(\Dot{\mathbf{u}}_{h}^{j+1},\mathbf{v}_{h}\right) + \frac{\tau}{\delta_0^{(k)}} \left(\nabla \Dot{\mathbf{u}}_{h}^{j+1}, \nabla \mathbf{v}_{h} \right) &+  \mathbf{b}(\hat{\mathbf{u}}_{h}^{j+1,(k)}; \mathbf{v}_{h},\lambda_{h}) \notag \\
    &= \sum_{i=1}^k \frac{\delta_i^{(k)}}{\delta_0^{(k)}} \left(\nabla  \mathbf{u}_{h}^{j+1-i}, \nabla \mathbf{v}_{h}\right), \label{eq:HMHF_TFEM_1}\\
     \mathbf{b}(\hat{\mathbf{u}}_{h}^{j+1,(k)}; \Dot{\mathbf{u}}_{h}^{j+1},w_{h}) &= 0. \label{eq:HMHF_TFEM_2}
\end{align}
Given $\Dot{\mathbf{u}}_{h}^{j+1}$ we determine $\bu_{h}^{j+1}$ as in \eqref{eq:HMHF_TFEM_update}.
We call \eqref{eq:HMHF_TFEM_1}-\eqref{eq:HMHF_TFEM_2} together with \eqref{eq:HMHF_TFEM_update} the tangential finite element method (TFEM) for HMHF.

Concerning well-posedness of \eqref{eq:HMHF_TFEM_1}-\eqref{eq:HMHF_TFEM_2} and the relation between this scheme and \eqref{eq:HMHF_TFEM} we note the following. Let
$\left(\Dot{\mathbf{u}}_{h}^{j+1}, \lambda_{h} \right) \in \left(\mathbf{{V}}_{h,0}^p,{V}_{h,0}^p\right)$be a solution of \eqref{eq:HMHF_TFEM_1}-\eqref{eq:HMHF_TFEM_2};  then \eqref{eq:HMHF_TFEM_2} implies $\Dot{\mathbf{u}}_{h}^{j+1} \in \mathbf{T}^ p_{h,0}\left(\Hat{\mathbf{u}}_{h}^{j+1,(k)}\right)$ and, restricting in \eqref{eq:HMHF_TFEM_1} to $\mathbf{v}_h \in  \mathbf{T}^ p_{h,0}\left(\Hat{\mathbf{u}}_{h}^{j+1,(k)}\right)$, then yields that $\Dot{\mathbf{u}}_{h}^{j+1}$ solves \eqref{eq:HMHF_TFEM}.  Hence, if the discrete saddle point problem \eqref{eq:HMHF_TFEM_1}-\eqref{eq:HMHF_TFEM_2}  has a unique solution, then its $\mathbf{u}$-solution is the same as that of \eqref{eq:HMHF_TFEM}. 

In \cite{KovacsLuebich} the well-posedness of \eqref{eq:HMHF_TFEM_1}-\eqref{eq:HMHF_TFEM_2} is not addressed. In the subsection below we show that under reasonable assumptions the saddle point formulation is well-posed.
\\[1ex]
\paragraph*{Well-posedness of the saddle point formulation}
The bilinear form $(\mathbf{w}_h,\mathbf{v}_h) \to \left(\mathbf{w}_{h},\mathbf{v}_{h}\right) + \frac{\tau}{\delta_0^{(k)}} \left(\nabla \mathbf{w}_{h}^{j+1}, \nabla \mathbf{v}_{h} \right)$ used in \eqref{eq:HMHF_TFEM_1} is elliptic on $\mathbf{{V}}_{h,0}^p$. 
Hence, we have well-posedness of \eqref{eq:HMHF_TFEM_1}-\eqref{eq:HMHF_TFEM_2} iff the bilinear form $\mathbf{b}(\hat{\mathbf{u}}_{h}^{j+1,(k)}; \cdot, \cdot)$ has the discrete  inf-sup property.  We have the following result: 
\begin{lemma} \label{leminfsup}
Assume that the solution $\bu$ of \eqref{HMHFeq} is sufficiently smooth and for the discrete approximation $(\bu_h^j)_{0 \leq j \leq J}$ of \eqref{eq:HMHF_TFEM}-\eqref{eq:HMHF_TFEM_update} we have uniform convergence $\bu_h^j  \to \bu(t_j,\cdot)$ in the following sense: For arbitrary $\varepsilon \in (0,1]$ there exist $\varepsilon_1 >0, \varepsilon_2 > 0$ (depending on $\bu$) such that
\begin{equation} \label{condconv}
  \delta_{\tau, h}:= \max_{0 \leq j \leq J} \|\bu_h^j - \bu(t_j,\cdot)\|_{L^\infty} \leq \varepsilon \quad \text{for all}~ \tau \leq \varepsilon_1,~h \leq \varepsilon_2.
\end{equation}
Then there exists $\beta=\beta(\bu) >0$, independent of $h$ and $\tau$, such that for $k\in \{0,1\}$ and  $\tau$, $h$ sufficiently small we have:
\begin{equation}\label{discrinfs}
    \sup_{\mathbf{v}_{h} \in \mathbf{{V}}_{h,0}^p} \frac{\mathbf{b}(\hat{\mathbf{u}}_{h}^{j+1,(k)}; \mathbf{v}_{h},w_{h})}{\|\bv_h\|_{L^2}} \geq \beta\,  \|w_h\|_{L^2} \quad \text{for all}~w_h \in V_{h,0}^p, ~0 \leq j \leq J-1.
\end{equation}
\end{lemma}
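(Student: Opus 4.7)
My strategy is to construct an explicit Fortin-style right inverse for $\mathbf{b}(\hat{\bu}_h^{j+1,(k)};\cdot,\cdot)$. The key facts I rely on are that, by construction of the extrapolations in \eqref{extrapol}, $|\hat{\bu}_h^{j+1,(k)}|\equiv 1$ pointwise, and that under \eqref{condconv} together with the smoothness of $\bu$ one gets
\[
 \|\hat{\bu}_h^{j+1,(k)}-\bu(t_{j+1},\cdot)\|_{L^\infty}\leq C(\bu)(\varepsilon+\tau^k)=:\delta_\star,
\]
by combining $\|\bu_h^{j-i}-\bu(t_{j-i},\cdot)\|_{L^\infty}\leq\varepsilon$ with the standard $O(\tau^k)$ extrapolation error of $\bu(t_{j+1},\cdot)$ in terms of $\bu(t_{j-i},\cdot)$ for $i=0,\dots,k-1$. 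The constant is uniform in $j$ because $\bu$ is smooth.

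\textbf{Construction of the candidate.} For given $w_h\in V_{h,0}^p$, set $\bu^\star:=\bu(t_{j+1},\cdot)$, which is smooth and satisfies $|\bu^\star|=1$, and take
\[
 \bv_h:=\mathcal I_h(w_h\,\bu^\star)\in\bV_{h,0}^p,
\]
where $\mathcal I_h$ denotes the (isoparametric, for $p=2$) nodal Lagrange interpolant. Since $w_h$ vanishes at every boundary Lagrange node, $w_h\bu^\star$ does too, hence $\bv_h\in\bV_{h,0}^p$.

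\textbf{The two estimates.} I split
\[
 \mathbf{b}(\hat{\bu}_h^{j+1,(k)};\bv_h,w_h)=\bigl((\hat{\bu}_h^{j+1,(k)}\!\cdot\!\bu^\star)w_h,w_h\bigr)+\bigl(\hat{\bu}_h^{j+1,(k)}\!\cdot\!(\bv_h-w_h\bu^\star),w_h\bigr).
\]
Using $\hat{\bu}_h^{j+1,(k)}\!\cdot\!\bu^\star=|\bu^\star|^2+(\hat{\bu}_h^{j+1,(k)}-\bu^\star)\!\cdot\!\bu^\star$ and $|\bu^\star|=1$, the first term is bounded below by $(1-\delta_\star)\|w_h\|_{L^2}^2$. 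For the second I apply elementwise the standard interpolation estimate $\|\mathcal I_h g-g\|_{L^2(K)}\leq Ch_K^{p+1}|g|_{H^{p+1}(K)}$ to $g=w_h\bu^\star$ (smooth on each $K$), bound $|w_h\bu^\star|_{H^{p+1}(K)}$ via the Leibniz rule by $\|\bu^\star\|_{W^{p+1,\infty}}$ times sums of Sobolev seminorms of $w_h|_K$, and absorb those via the inverse inequality $\|\nabla^s w_h\|_{L^2(K)}\leq Ch_K^{-s}\|w_h\|_{L^2(K)}$ valid on quasi-uniform meshes. This yields $\|\bv_h-w_h\bu^\star\|_{L^2}\leq Ch\,\|w_h\|_{L^2}$ and $\|\bv_h\|_{L^2}\leq(1+Ch)\|w_h\|_{L^2}$, whence
\[
 \sup_{\bv_h\in\bV_{h,0}^p}\frac{\mathbf{b}(\hat{\bu}_h^{j+1,(k)};\bv_h,w_h)}{\|\bv_h\|_{L^2}}\geq\frac{(1-C(\delta_\star+h))\|w_h\|_{L^2}^2}{(1+Ch)\|w_h\|_{L^2}}\geq\beta\,\|w_h\|_{L^2}
\]
for some $\beta=\beta(\bu)>0$ once $\varepsilon$, $\tau$, $h$ are sufficiently small.

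\textbf{Main obstacle.} The delicate point is the bound $\|\mathcal I_h(w_h\bu^\star)-w_h\bu^\star\|_{L^2}\leq Ch\|w_h\|_{L^2}$ with $C=C(\bu)$: the product $w_h\bu^\star$ is only piecewise smooth, and the derivatives of $w_h$ produced by the Leibniz rule must be swallowed by the inverse inequality without losing the gain in $h$. The other ingredients, in particular the pointwise closeness of $\hat{\bu}_h^{j+1,(k)}$ to a unit vector field and the preservation of homogeneous boundary data under nodal interpolation, are straightforward consequences of the standing hypotheses.
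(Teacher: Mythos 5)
Your proposal is correct and follows essentially the same route as the paper's proof: the Fortin-type test function $\mathcal I_h(w_h\,\bu)$ with $\bu$ the smooth unit-length exact solution, the elementwise interpolation estimate combined with the Leibniz rule and inverse inequalities to get the $O(h)\|w_h\|_{L^2}$ perturbation, and the $L^\infty$-closeness of the normalized extrapolation to $\bu$ to recover the leading term $\|w_h\|_{L^2}^2$. The only cosmetic differences are that the paper anchors the construction at $t_j$ rather than $t_{j+1}$ and writes the splitting as three terms instead of two.
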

\begin{proof} A proof is given in the Appendix \ref{appendix:InfSupStab}.
 \end{proof}
 \ \\
 The result \eqref{discrinfs} yields well-posedness of \eqref{eq:HMHF_TFEM_1}-\eqref{eq:HMHF_TFEM_2}, provided assumption \eqref{condconv} is satisfied and $\tau $ and $h$ are sufficiently small. Moreover, the discrete inf-sup constant $\beta$ in \eqref{discrinfs} is independent of the discretization parameters $h$ and $\tau$, which implies a control of the condition number of the Schur complement of the symmetric indefinite system matrix corresponding to the saddle point problem \eqref{eq:HMHF_TFEM_1}-\eqref{eq:HMHF_TFEM_2}.
 \begin{remark} \label{reminfsup} \rm 
 Concerning the assumption \eqref{condconv} we note the following. In \cite[Theorem 3.1]{KovacsLuebich}, for a variant of the  scheme \eqref{eq:HMHF_TFEM}-\eqref{eq:HMHF_TFEM_update} applied to the LLG equation, an optimal error bound of the form
 \begin{equation} \label{boundLubich}
    \max_{0 \leq j \leq J} \|\bu_h^j - \bu(t_j,\cdot)\|_{H^1} \leq c( \tau^k +h^p), \quad k=1,2,
 \end{equation}
is derived under the very mild CFL-type condition $\tau^k \leq \bar c h^\frac12$ for a sufficiently small constant $\bar c$. Assume that \eqref{boundLubich} also holds for the scheme applied to the HMHF problem and assume that a slightly stronger mesh-parameter condition is satisfied, namely $\tau^k \leq c h^{\frac12 +\epsilon}$ holds for some constants $c > 0$ and $\epsilon >0$. Then we obtain, with $I_h$ the nodal interpolation in the finite element space $\bV_{h,0}^p$:
\begin{align*}
 &  \max_{0 \leq j \leq J} \|\bu_h^j - \bu(t_j,\cdot)\|_{L^\infty} \leq \max_{0 \leq j \leq J} \|\bu_h^j - I_h \bu(t_j,\cdot)\|_{L^\infty} + c h^{p+1} \\
 & \leq c h^{-\frac12} \max_{0 \leq j \leq J} \|\bu_h^j - I_h \bu(t_j,\cdot)\|_{H^1} + c h^{p+1}  \leq c h^{-\frac12}\max_{0 \leq j \leq J} \|\bu_h^j -  \bu(t_j,\cdot)\|_{H^1} + c h^{p - \frac12} \\ &  \leq c h^{-\frac12} \tau^k +c h^{p - \frac12} \leq c h^{\min \{\epsilon, p -\frac12\} },  
\end{align*}
which implies that the assumption \eqref{condconv} is satisfied. 
 \end{remark}
\paragraph*{Numerical results}
A reference solution $\mathbf{u}_{h}^{\text{ref}}$ is determined in the same way as in Section \ref{sec:HMHF_PPFEM}. The discrete approximations $\mathbf{u}_{h}^j$ are computed using TFEM \eqref{eq:HMHF_TFEM_1}-\eqref{eq:HMHF_TFEM_2} and \eqref{eq:HMHF_TFEM_update}. Results are shown in Tables~\ref{table:HMHF2D_TFEM_L2_H1_Convergence_tau_bdf1}--\ref{table:HMHF_2D_TFEM_L2_H1_Convergence_h_p2}.\\
\begin{table}[ht!]
\centering
\begin{tabular}{ |p{2.5cm}|| p{2.65cm}| p{1cm}|| p{2.65cm}|p{1cm}|}
 \hline
  $h=2^{-6}$ & $\lVert \mathbf{u}_{h}^{J} - \mathbf{u}^{\text{ref},J}_{h} \rVert_{L^2}$ & EOC & $\lVert \mathbf{u}_{h}^{J} - \mathbf{u}^{\text{ref},J}_{h}  \rVert_{H^1}$ & EOC\\
 \hline
 $\tau=5 \cdot 10^{-2}$  & $1.0814e-01$ & $-$  & $4.3607e-01$ & $-$ \\ 
 $\tau=2.5 \cdot 10^{-2}$  & $6.1103e-02$ & $ 0.82$  & $2.4512e-01$ & $0.83$ \\
 $\tau=1.25 \cdot 10^{-2}$ & $3.2653e-02$ & $0.90$ & $1.3072e-01$ & $0.91$\\
 $\tau=6.25 \cdot 10^{-3}$ & $1.6902e-02$ & $0.95$ & $6.7616e-02$ & $0.95$\\
$\tau=3.125 \cdot 10^{-3}$ & $8.6006e-03$ & $0.97$  & $3.4404e-02$& $0.97$ \\
$\tau=1.5625 \cdot 10^{-3}$ & $4.3381e-03$ & $0.99$ & $1.7355e-02$ & $0.99$\\
$\tau=7.8125 \cdot 10^{-4}$ & $2.1783e-03$ & $0.99$ & $8.7161e-03$ & $0.99$\\

 \hline 
\end{tabular}
\caption{Error for TFEM (\eqref{eq:HMHF_TFEM_1}-\eqref{eq:HMHF_TFEM_2} and \eqref{eq:HMHF_TFEM_update}) with BDF1 and $\mathbf{{V}}_{h}^2$;  $h=2^{-6}$ fixed. }
\label{table:HMHF2D_TFEM_L2_H1_Convergence_tau_bdf1}
\end{table}
\begin{table}[ht!]
\centering
\begin{tabular}{ |p{2.5cm}|| p{2.65cm}| p{1cm}|| p{2.65cm}|p{1cm}|}
 \hline
  $h=2^{-6}$ & $\lVert \mathbf{u}_{h}^{J} - \mathbf{u}^{\text{ref},J}_{h} \rVert_{L^2}$ & EOC & $\lVert \mathbf{u}_{h}^{J} - \mathbf{u}^{\text{ref},J}_{h}  \rVert_{H^1}$ & EOC\\
 \hline
 $\tau=5 \cdot 10^{-2}$  & $7.4457e-02$ & $-$  & $2.9762e-01$ & $-$ \\ 
 $\tau=2.5 \cdot 10^{-2}$  & $2.0902e-02$ & $ 1.83$  & $8.4048e-02$ & $1.82$ \\
 $\tau=1.25 \cdot 10^{-2}$ & $5.2568e-03$ & $1.99$ & $2.2060e-02$ & $1.93$\\
 $\tau=6.25 \cdot 10^{-3}$ & $1.3969e-03$ & $1.91$ & $6.3743e-03$ & $ 1.80$\\
$\tau=3.125 \cdot 10^{-3}$ & $3.6834e-04$ & $1.92$  & $1.8829e-03$& $1.76$ \\
 \hline 
\end{tabular}
\caption{Error for TFEM (\eqref{eq:HMHF_TFEM_1}-\eqref{eq:HMHF_TFEM_2} and \eqref{eq:HMHF_TFEM_update}) with BDF2  and $\mathbf{{V}}_{h}^2$; $h=2^{-6}$ fixed. }
\label{table:HMHF2D_TFEM_L2_H1_Convergence_tau_bdf2}
\end{table}

Similar to PPFEM, Table \ref{table:HMHF2D_TFEM_L2_H1_Convergence_tau_bdf1} shows  for BDF1  an optimal convergence rate of order 1  in $\tau$, both in the $L^2$- and $H^1$-norm. 
In Table~\ref{table:HMHF2D_TFEM_L2_H1_Convergence_tau_bdf2} we observe the optimal convergence rate of order 2 for BDF2 with respect to $\tau$ in $L^2$ and slightly less than 2 in the $H^1$-norm. 
\begin{table}[ht!]
\centering
\begin{tabular}{ |p{2.5cm}|| p{2.65cm}| p{1cm}|| p{2.65cm}|p{1cm}|}
 \hline
  $\tau = 10^{-6}$ & $\lVert \mathbf{u}_{h}^{J} - \mathbf{u}^{\text{ref},J}_{h} \rVert_{L^2}$ & EOC & $\lVert \mathbf{u}_{h}^{J} - \mathbf{u}^{\text{ref},J}_{h}  \rVert_{H^1}$ & EOC\\
 \hline
  $h=2^{-2}$  & $3.7887e-02$ & $-$  & $2.0071e-01$ & $-$ \\
 $h=2^{-3}$  & $9.2785e-03$ & $2.03$  & $8.7882e-02$ & $1.19$ \\
 $h=2^{-4}$ & $2.0418e-03$ & $2.18$ & $3.5116e-02$ & $1.32$\\
 $h=2^{-5}$ & $4.6612e-04$ & $2.13$ & $1.2721e-02$ & $1.46$\\
$h=2^{-6}$ & $1.1216e-04$ & $2.06$  & $4.5600e-03$ & $1.48$ \\
 \hline 
\end{tabular}
\caption{Error for TFEM (\eqref{eq:HMHF_TFEM_1}-\eqref{eq:HMHF_TFEM_2} and \eqref{eq:HMHF_TFEM_update}) with $\mathbf{{V}}_{h}^1$ and BDF2; $\tau = 10^{-6}$ fixed.}
\label{table:HMHF_2D_TFEM_L2_H1_Convergence_h_p1}
\end{table}
\begin{table}[ht!]
\centering
\begin{tabular}{ |p{2.5cm}|| p{2.65cm}| p{1cm}|| p{2.65cm}|p{1cm}|}
 \hline
  $\tau = 10^{-6}$ & $\lVert \mathbf{u}_{h}^{J} - \mathbf{u}^{\text{ref},J}_{h} \rVert_{L^2}$ & EOC & $\lVert \mathbf{u}_{h}^{J} - \mathbf{u}^{\text{ref},J}_{h}  \rVert_{H^1}$ & EOC\\
 \hline
  $h=2^{-2}$  & $5.5044e-03$ & $-$  & $4.4757e-02$ & $-$ \\
 $h=2^{-3}$  & $6.0176e-04$ & $3.19$  & $7.0017e-03$ & $2.68$ \\
 $h=2^{-4}$ & $7.0011e-05$ & $3.10$ & $1.1024e-03$ & $2.67$\\
 $h=2^{-5}$ & $ 8.3761e-06$ & $3.06$ & $1.8427e-04$ & $ 2.58$\\
  $h=2^{-6}$ & $ 1.0403e-06$ & $3.01$ & $3.4234e-05$ & $ 2.43$\\
 \hline 
\end{tabular}
\caption{Error for TFEM (\eqref{eq:HMHF_TFEM_1}-\eqref{eq:HMHF_TFEM_2} and \eqref{eq:HMHF_TFEM_update}) with $\mathbf{{V}}_{h}^2$ and BDF2; $\tau = 10^{-6}$ fixed. }
\label{table:HMHF_2D_TFEM_L2_H1_Convergence_h_p2}
\end{table}

Table~\ref{table:HMHF_2D_TFEM_L2_H1_Convergence_h_p1} shows an optimal convergence rate of order 2 in the $L^2$ norm with respect to $h$, while the convergence rate in the $H^1$-norm is slightly above the optimal rate of 1, again similar to PPFEM, cf. Table~\ref{table:HMHF_2D_PPFEM_L2_H1_Convergence_h_p1}. Table~\ref{table:HMHF_2D_TFEM_L2_H1_Convergence_h_p2} also shows an optimal convergence rate of order 3 in the $L^2$ norm with respect to $h$, while the convergence rate in the $H^1$-norm is slightly above the optimal rate of 2. \\
Summarizing, the results for PPFEM and TFEM are very similar, except for BDF2 combined with  $\mathbf{V}_{h}^2$ in the Tables~\ref{table:HMHF_2D_PPFEM_L2_H1_Convergence_h_p2} and \ref{table:HMHF_2D_TFEM_L2_H1_Convergence_h_p2}. 


\subsubsection{Treatment of unit length constraint based on a constraint preserving formulation}
\label{sec:HMHF_Bartels_Method}
In view of the vector identity
\begin{align*}
    \mathbf{a} \times (\mathbf{b} \times  \mathbf{c}) = \left(\mathbf{a}\cdot \mathbf{c} \right) \mathbf{b} - \left(\mathbf{a}\cdot \mathbf{b} \right) \mathbf{c} \quad \forall \mathbf{a},\mathbf{b},\mathbf{c} \in \mathbb{R}^3,
\end{align*}
we can write  
\begin{align*}
     \mathbf{u} \times (\mathbf{u} \times  \Laplace \mathbf{u}) = \left(\mathbf{u} \cdot \Laplace \mathbf{u} \right) \mathbf{u} - |\mathbf{u}|^2 \Laplace \mathbf{u}.
\end{align*}
Because of $\mathbf{u} \in \mathbb{S}^2$, we have $|\mathbf{u}|=1$ and $\mathbf{u}\cdot \nabla \mathbf{u} = 0$ and obtain
\begin{align*}
    \mathbf{u} \times (\mathbf{u} \times  \Laplace \mathbf{u}) &= \left(\mathbf{u} \cdot \Laplace \mathbf{u} \right) \mathbf{u} - \Laplace \mathbf{u} \\
    &= \left(\nabla \left(\mathbf{u}\cdot \nabla \mathbf{u}\right) - \nabla \mathbf{u} \cdot \nabla \mathbf{u} \right) \mathbf{u}-\Laplace \mathbf{u} \\
    &= - \left| \nabla \mathbf{u} \right|^2 \mathbf{u} - \Laplace \mathbf{u}.
\end{align*}
Thus, we can rewrite \eqref{HMHFeq} as
\begin{align}
    \partial_t\mathbf{u}= - \mathbf{u} \times (\mathbf{u} \times  \Laplace \mathbf{u}) \text{ on } D, t \in (0,T]. \label{eq:HMHF_PDE_DoubleCross}
\end{align}
Note that multiplying both sides of \eqref{eq:HMHF_PDE_DoubleCross} with $\mathbf{u}$ results in
$
    \partial_t\mathbf{u} \cdot \mathbf{u} = 0 
$,
and thus
\begin{align} \label{preserv}
    \frac{1}{2} \partial_t \left(\left| \mathbf{u} \right|^2\right)=0.
\end{align}
Since $|\bu_0|=1$ on $D$ holds, this implies that a solution of \eqref{eq:HMHF_PDE_DoubleCross} preserves the unit length property $\left| \mathbf{u} \right|=1$ on $D$. 
The method presented in \cite{BartelsProhl2007} is based on the double cross formulation of the harmonic map heat flow \eqref{eq:HMHF_PDE_DoubleCross}. A variational formulation reads: Given $\mathbf{u}_0 \in H^1({D};\mathbb{R}^3) $ with $\left|\mathbf{u}_0\right|=1$ (a.e.), find $\mathbf{u} \in C^1([0,T]; H^1({D};\mathbb{R}^3)$  with $\mathbf{u}(0,\cdot)=\mathbf{u}_0$ and $\bu_{|\partial D}=(\bu_0)_{|\partial D}$ such that for all $\bv \in H^1_0({D};\mathbb{R}^3)$
\begin{align*}
    &\left(\partial_t \mathbf{u}, \mathbf{v}\right) + \left(\mathbf{u} \times (\mathbf{u} \times  \Laplace \mathbf{u}), \mathbf{v} \right) = 0, \quad t \in (0,T].
\end{align*}
Let $\{ \phi_z: z\in \mathbf{\mathcal{N}}_{h}\} $ be the nodal basis of the linear finite element space $V_{h}^1$. For  $\mathbf{u},\mathbf{v} \in C^0(\Bar{D};\mathbb{R}^3)$, we define 
\begin{align*}
    \left(\mathbf{u},\mathbf{v}\right)_{h} :=\sum_{z\in \mathbf{\mathcal{N}}_{h}} \beta_z \langle \mathbf{u}(z),\mathbf{v}(z) \rangle, \quad \beta_z := \int_{D}\phi_z dx,
\end{align*}
where $\langle \cdot, \cdot \rangle$ is the Euclidean scalar product in $\mathbb{R}^3$. The corresponding seminorm is denoted by $\lVert \mathbf{u} \rVert_{h} :=(\mathbf{u},\mathbf{u})_{h}^\frac12 $. On $\bV_h^1$ this defines a norm that  is equivalent to the $L^2$-norm \cite[Equation (2.3)]{BartelsProhl2007}. The discrete Laplacian $\Tilde{\Laplace}_{h}:H^1(D;\mathbb{R}^3)\rightarrow \mathbf{V}_{h}^1$ is given by
\begin{align*}
    -\left(\Tilde{\Laplace}_{h} \mathbf{u},\mathbf{v} \right)_{h} = \left(\nabla \mathbf{u},\nabla \mathbf{v}\right) \quad \text{for all}~ \mathbf{v} \in \mathbf{V}_{h}^1.
\end{align*}
We define 
\begin{align*}
    \Bar{\mathbf{u}}_{h}^{j+1/2} := \frac{1}{2}\left(\mathbf{u}_{h}^{j+1}+\mathbf{u}_{h}^j\right). 
\end{align*}
Following \cite{BartelsProhl2007}, we use BDF1 for time discretization. For a given time step $\tau$ (with $\tau J=T$) the resulting discretization reads: Given $\mathbf{u}_{h}^{0}=\mathcal{I}_{h}(\mathbf{u}_0)$, for $j=0, \ldots, J-1$, find $\mathbf{u}_{h}^{j+1} \in \mathbf{V}_{h,\mathbf{u}_0}^1$ such that for all $\mathbf{v}_{h} \in \mathbf{V}^1_{h,0}$
\begin{align}
    \frac{1}{\tau}\left(\mathbf{u}^{j+1}_{h} - \mathbf{u}^j_{h}, \mathbf{v}_{h} \right)_{h} + \left(\Bar{\mathbf{u}}_{h}^{j+1/2}\times (\Bar{\mathbf{u}}_{h}^{j+1/2} \times  \Tilde{\Laplace}_{h} \Bar{\mathbf{u}}_{h}^{j+1/2}), \mathbf{v}_{h}\right)_{h} = 0. \label{eq:HMHF_BFEM}
\end{align}
Testing \eqref{eq:HMHF_BFEM} with $\mathbf{v}_{h}=\Bar{\mathbf{u}}_{h}^{j+1/2}(z)\mathbf{\phi}_z$ for $z \in {\mathbf{\mathcal{N}}_{h}}$ results in
\begin{align}
    \frac{1}{\tau} \left[ \left(\mathbf{u}^{j+1}_{h}(z)\right)^2 - \left(\mathbf{u}^{j}_{h}(z) \right)^2  \right] = 0, \label{eq:HMHF_2D_CrossFEM_UnitLength}
\end{align}
which is a discrete analogon of the length preservation property \eqref{preserv}. It implies that the constraint is pointwise satisfied at every mesh point $z \in {\mathbf{\mathcal{N}}_{h}}$ provided  the initial condition satisfies the norm constraint. 
Due to the cross product the discrete problem \eqref{eq:HMHF_BFEM} for the unknown $\mathbf{u}_{h}^{j+1}$ is strongly nonlinear. In  \cite{BartelsProhl2007},  the following iterative linearization method is proposed. 
We introduce $\mathbf{w}^{j+1}_{h}:=\Bar{\mathbf{u}}_{h}^{j+1/2}$ and rewrite \eqref{eq:HMHF_BFEM} as
\begin{align} \label{nonlineareq}
    \frac{2}{\tau}\left(\mathbf{w}^{j+1}_{h} , \mathbf{v}_{h} \right)_{h} + \left(\mathbf{w}_{h}^{j+1}\times (\mathbf{w}_{h}^{j+1} \times  \Tilde{\Laplace}_{h} \mathbf{w}_{h}^{j+1}), \mathbf{v}_{h}\right)_{h} = \frac{2}{\tau}\left(\mathbf{u}^j_{h},\mathbf{v}_{h}\right)_{h}
\end{align}
for all $\mathbf{v}_{h} \in \mathbf{V}^1_{h,0}$.
To this nonlinear problem a fixed point iteration (Algorithm \ref{algo:HMHF_Bartels}) is applied, cf.~\cite[Algorithm 4.1]{BartelsProhl2007}. 

\begin{algorithm}[ht!]
    \SetKwRepeat{Do}{do}{while}
    \caption{From \cite[Algorithm 4.1]{BartelsProhl2007}}
    \label{algo:HMHF_Bartels}
    \textbf{Given:} $\mathbf{u}_{h}^{0}=\mathcal{I}_{h}(\mathbf{u}_0), \tau >0, \epsilon>0$ and $j=0$ \\[1ex]
    \While{$ j \tau  \leq T$}
    {
        Set $\mathbf{w}_{h}^{j+1,0}:=\mathbf{u}_{h}^j$ and $l:=0$ \\
        \Do{$\lVert \mathbf{R}_{h}^{j+1} \rVert \geq \epsilon$}
        {
        Compute $\mathbf{w}_{h} \in \mathbf{V}^1_{h,\mathbf{u}_0}$ such that for all $\mathbf{v}_{h} \in \mathbf{V}^1_{h,0}$ 
        \begin{align*}
           \frac{2}{\tau}\big(\mathbf{w}_{h} , \mathbf{v}_{h} \big)_{h} + \big(\mathbf{w}_{h}\times (\mathbf{w}_{h}^{j+1,l} \times  \Tilde{\Laplace}_{h} \mathbf{w}_{h}^{j+1,l}), \mathbf{v}_{h}\big)_{h} = \frac{2}{\tau}\big(\mathbf{u}^j_{h},\mathbf{v}_{h}\big)_h
        \end{align*}
        Set $\mathbf{w}^{j+1,l+1}_{h}:=\mathbf{w}_h$, $\mathbf{e}_{h}^{j+1,l+1}:=\mathbf{w}^{j+1,l+1}_{h}-\mathbf{w}^{j+1,l}_{h}$ and compute 
        \begin{align*}
            \mathbf{R}_{h}^{j+1} = \mathbf{w}^{j+1,l+1}_{h} \times \Tilde{\Laplace}_{h} \mathbf{e}_{h}^{j+1,l+1} + \mathbf{e}_{h}^{j+1,l+1} \times \Tilde{\Laplace}_{h} \mathbf{w}_{h}^{j+1,l}
        \end{align*}
        Set $l:=l+1$
        }
        Set $\mathbf{u}_{h}^{j+1}:=2 \mathbf{w}^{j+1,l+1}_{h} - \mathbf{u}_{h}^{j}$ and $j:=j+1$
    }
\end{algorithm}

For the constraint preserving discretization \eqref{nonlineareq} combined with the fixed point linearization Algorithm \ref{algo:HMHF_Bartels} we introduce the notation CPFEM$+$FP.
\begin{remark} \rm 
	In \cite{BartelsProhl2007}, for the case of a Neumann boundary condition, a result on (unconditional) weak convergence (for $h \to 0$) of the solution of \eqref{eq:HMHF_BFEM} to the (weak) solution of the harmonic map heat flow \eqref{HMHFeq} is derived. No result for the rate of  convergence rate is available. 
	Furthermore, in \cite[Theorem 4.1]{BartelsProhl2007}, it is proved that Algorithm \ref{algo:HMHF_Bartels} converges provided the strong CFL condition $\tau = \mathcal{O}(h^2)$ is satisfied. No rigorous results on rate of convergence of Algorithm \ref{algo:HMHF_Bartels} are known. 
\end{remark}

In \cite{BartelsProhl2007} the fixed point linearization as in Algorithm \ref{algo:HMHF_Bartels} is used because it fits in the analysis framework of that paper. 
Instead of this fixed point linearization one can also use the Newton method, which, however, does not fit in the analysis framework of \cite{BartelsProhl2007}. We derive the Newton linearization and also include numerical results using this method. Note that for $\by, \bz \in \Bbb{R}^3$, $A \in \Bbb{R}^{3 \times 3}$ we have the linearization 
\begin{equation*}
\begin{split}
   & (\by+\bz) \times \big((\by +\bz)\times A(\by+\bz)\big)  = \by \times (\by \times A\by) 
    \\ & + \bz \times (\by \times A \by) + \by \times (\bz \times A \by) + \by \times (\by \times  A\bz) + \mathcal{O}(\|\bz\|^2) \quad \text{($\|\bz\| \to 0$)}.
\end{split} 
\end{equation*}
Hence for approximating $\mathbf{w}^{j+1}_{h}$ in \eqref{nonlineareq} the Newton iteration is  as follows. Set $\mathbf{w}^{j+1,0}_{h}=\mathbf{u}^{j}_{h}$ and for $l=0,1,2,\ldots$, $\mathbf{w}^{j+1,l+1}_{h}=\mathbf{w}^{j+1,l}_{h} + \bz_h$, with $\bz_h \in
\mathbf{V}^1_{h,0}$ such that for all $\mathbf{v}_{h}\in
\mathbf{V}^1_{h,0}$: 
\begin{equation}
	\begin{split}
 & \frac{2}{\tau}\left(\bz_h , \mathbf{v}_{h} \right)_{h} + \left(\bz_h\times (\mathbf{w}_{h}^{j+1,l} \times  \Tilde{\Laplace}_{h} \mathbf{w}_{h}^{j+1,l}), \mathbf{v}_{h}\right)_{h}
 \\ & + \left(\mathbf{w}_{h}^{j+1,l}\times (\bz_h \times  \Tilde{\Laplace}_{h} \mathbf{w}_{h}^{j+1,l}), \mathbf{v}_{h}\right)_{h}+\left(\mathbf{w}_{h}^{j+1,l}\times (\mathbf{w}_{h}^{j+1,l} \times  \Tilde{\Laplace}_{h} \bz_h), \mathbf{v}_{h}\right)_{h}
 \\ & = \frac{2}{\tau}\left(\mathbf{u}^j_{h},\mathbf{v}_{h}\right)_{h}- \frac{2}{\tau}\left(\mathbf{w}^{j+1,l}_{h} , \mathbf{v}_{h} \right)_{h} -\left(\mathbf{w}_{h}^{j+1,l}\times (\mathbf{w}_{h}^{j+1,l} \times  \Tilde{\Laplace}_{h} \mathbf{w}_{h}^{j+1,l}), \mathbf{v}_{h}\right)_{h}.
\end{split} \label{eq:CPFEM_Newton}
\end{equation}
For the constraint preserving discretization \eqref{nonlineareq} combined with the Newton 
linearization \eqref{eq:CPFEM_Newton} we introduce the notation CPFEM$+$Newton.
In each iteration of the Newton method one has to solve a linear equation  of the form
\begin{equation} \label{Newtonlinear}
	\left(\frac{2}{\tau} M+ C + B + M^{-1}K \right) \underline{z} = f,
\end{equation}
with a symmetric positive definite mass matrix $M$, a symmetric positive definite stiffness matrix $K$ and nonsymmetric matrices  $C$ and $B$. The latter two matrices have conditioning properties comparable to that of a mass matrix. Note that the linear system that has to be solved in Algorithm \ref{algo:HMHF_Bartels} is of the form
\begin{equation} \label{FPlinear}
 \left(\frac{2}{\tau} M+ C\right) \underline{z} = f,
\end{equation}
hence, it does not contain the stiffness matrix component that is present in \eqref{Newtonlinear}.

\paragraph*{Numerical results}
A reference solution $\mathbf{u}_{h}^{\text{ref}}$ is determined in the  same way as in Section~\ref{sec:HMHF_PPFEM}. The discrete approximations  $\mathbf{u}_{h}^j$ are  computed using CPFEM$+$FP. \\
Results for this method are presented in Table \ref{table:HMHF_2D_CrossFEM_FixedPoint_L2_H1_Convergence_h_p1}. These show an optimal convergence rate of order 2 in the $L^2$ norm with respect to $h$, while the convergence rate in the $H^1$ norm is slightly higher than the optimal one. The results are   similar to those of PPFEM, cf.~Table \ref{table:HMHF_2D_PPFEM_L2_H1_Convergence_h_p1} and of TFEM, cf. Table~\ref{table:HMHF_2D_TFEM_L2_H1_Convergence_h_p1}. We use a tolerance  parameter $\epsilon = 10^{-10} $; in this example the fixed point Algorithm~\ref{algo:HMHF_Bartels} then needs on average 3 iterations per time step to satisfy the tolerance criterion. Numerical experiments show that the CFL condition $\tau = \mathcal{O}(h^2)$ that is needed in the analysis is essential. For example, Algorithm~\ref{algo:HMHF_Bartels} does not converge for our example if we use a scaling $\tau \sim h$.  \\
Because of this CFL time step restriction $\tau = \mathcal{O}(h^2)$, it is not possible to study the convergence rate depending on $\tau$ for a fixed $h$ that is small enough such that the spatial error does not dominate.

The convergence rates for the constrained preserving method with the Newton linearization, CPFEM$+$Newton, with a fixed time step $\tau = 10^{-6}$, tolerance parameter $\epsilon=10^{-10}$ and varying $h$, are the same as the results displayed in Table \ref{table:HMHF_2D_CrossFEM_FixedPoint_L2_H1_Convergence_h_p1}. The Newton iteration requires on average two iterations to satisfy the tolerance criterion. We observe that, in this example,  the CFL condition $\tau = \mathcal{O}(h^2)$ is \emph{not} necessary for the convergence of the Newton iteration in each time step. Hence, we can study its convergence behavior for a fixed small mesh size $h$ and varying $\tau$, see Table \ref{table:HMHF_2D_CrossNewton_L2_H1_Convergence_tau_bdf1}. 

The method reaches a $L^2$-error of order $10^{-4}$ and an $H^1$-error of order $10^{-3}$ with a significantly larger time step than PPFEM and TFEM, cf. Tables \ref{table:HMHF_2D_PPFEM_L2_H1_Convergence_tau_bdf1} and \ref{table:HMHF2D_TFEM_L2_H1_Convergence_tau_bdf1}. This is related to the relatively very fast convergence in the beginning of the $\tau$ refinement process (large EOC).  In the last time step we observe that the errors stagnate due to the spatial error dominating.

\begin{table}[ht!]
\centering
\begin{tabular}{ |p{2.5cm}|| p{2.65cm}| p{1cm}|| p{2.65cm}|p{1cm}|}
 \hline
  $\tau = 10^{-6}$ & $\lVert \mathbf{u}_{h}^{J} - \mathbf{u}^{\text{ref},J}_{h} \rVert_{L^2}$ & EOC & $\lVert \mathbf{u}_{h}^{J} - \mathbf{u}^{\text{ref},J}_{h}  \rVert_{H^1}$ & EOC\\
 \hline
  $h=2^{-2}$  & $6.3361e-02$ & $-$  & $3.0327e-01$ & $-$ \\
 $h=2^{-3}$  & $1.5403e-02$ & $2.04$  & $1.0703e-01$ & $1.50$ \\
 $h=2^{-4}$ & $3.6554e-03$ & $2.08$ & $3.9342e-02$ & $1.44$\\
 $h=2^{-5}$ & $9.2286e-04$ & $1.99$ & $1.3897e-02$ & $1.50$\\
$h=2^{-6}$ & $2.3430e-04$ & $1.98$  & $5.0386e-03$ & $1.46$ \\
 \hline 
\end{tabular}
\caption{Error for CPFEM$+$FP, \eqref{nonlineareq} combined with Algorithm \ref{algo:HMHF_Bartels}, with $\epsilon = 10^{-10}$;  $\tau = 10^{-6}$ fixed.}
\label{table:HMHF_2D_CrossFEM_FixedPoint_L2_H1_Convergence_h_p1}
\end{table}

\begin{table}[ht!]
	\centering
	\begin{tabular}{ |p{2.5cm}|| p{2.65cm}| p{1cm}|| p{2.65cm}|p{1cm}|}
		\hline
		$h=2^{-6}$ & $\lVert \mathbf{u}_{h}^{J} - \mathbf{u}^{\text{ref},J}_{h} \rVert_{L^2}$ & EOC & $\lVert \mathbf{u}_{h}^{J} - \mathbf{u}^{\text{ref},J}_{h}  \rVert_{H^1}$ & EOC\\
		\hline
		$\tau=5 \cdot 10^{-2}$  & $9.9508e-03$ & $-$  & $1.2203e-01$ & $-$ \\ 
		$\tau=2.5 \cdot 10^{-2}$  & $1.9746e-03$ & $2.33$  & $4.1280e-02$ & $1.56$ \\
		$\tau=1.25 \cdot 10^{-2}$ & $4.0463e-04$ & $2.29$ & $1.4794e-02$ & $1.48$\\
		$\tau=6.25 \cdot 10^{-3}$ & $2.0388e-04$ & $0.99$ & $6.4949e-03$ & $1.19$\\
		$\tau=3.125 \cdot 10^{-3}$ & $2.2208e-04$ & $-0.12$  & $5.0659e-03$& $0.36$ \\
		\hline 
	\end{tabular}
	\caption{Error for CPFEM$+$Newton,  \eqref{nonlineareq} combined with 
 \eqref{eq:CPFEM_Newton}, with $\epsilon = 10^{-10}$; $h=2^{-6}$ fixed. }
	\label{table:HMHF_2D_CrossNewton_L2_H1_Convergence_tau_bdf1}
\end{table}

\section{Outlook: Numerical experiment for a  blow up case} \label{SecOutlook}
In this section, as a motivation for further research,  we present results of a few numerical experiments in which the methods PPFEM (pointwise projection), TFEM (tangent space approach), CPFEM$+$FP (constraint preserving discretization combined with fixed point) and CPFEM$+$Newton (constraint preserving discretization combined with Newton) are applied to a HMHF problem in which blow up occurs. \\ 
From the results of \cite{ChangDingYe1992} we know that the derivative of the solution of the harmonic map heat flow problem \eqref{HMHFeq} blows up in finite time at the origin if the initial condition is of the form \eqref{eq:HMHF_InitialCondSphericallySymmetric} and satisfies $|u_0(1)|>\pi$. We investigate how the above-mentioned discretization methods for the two-dimensional problem \eqref{HMHFeq} behave  when approaching the blow up time.

We choose the initial condition
\begin{align}
	u_0(r) = 4.25 \pi r^2. \label{eq:InitCond_blowup}
\end{align}

First, we discretize the  reduced harmonic map heat flow equation \eqref{eq:HMHF_1DPDE} using the method \eqref{eq:HMHF_1D_full_discr} for $p=1$ and $k=1$. Since we know that the singularity can only occur at the origin, the spatial grid on $[0,1]$ is strongly refined close to the origin. Numerical experiments in \cite[Section 4.1]{HaynesHuangZegeling2013} yield an approximate blow up time $0.0835$ for the solution of \eqref{eq:HMHF_1DPDE} with the initial condition \eqref{eq:InitCond_blowup}. Hence we choose the end time $T=0.083$.
\begin{figure}[ht!]
	\centering
	\includegraphics[width=\textwidth]{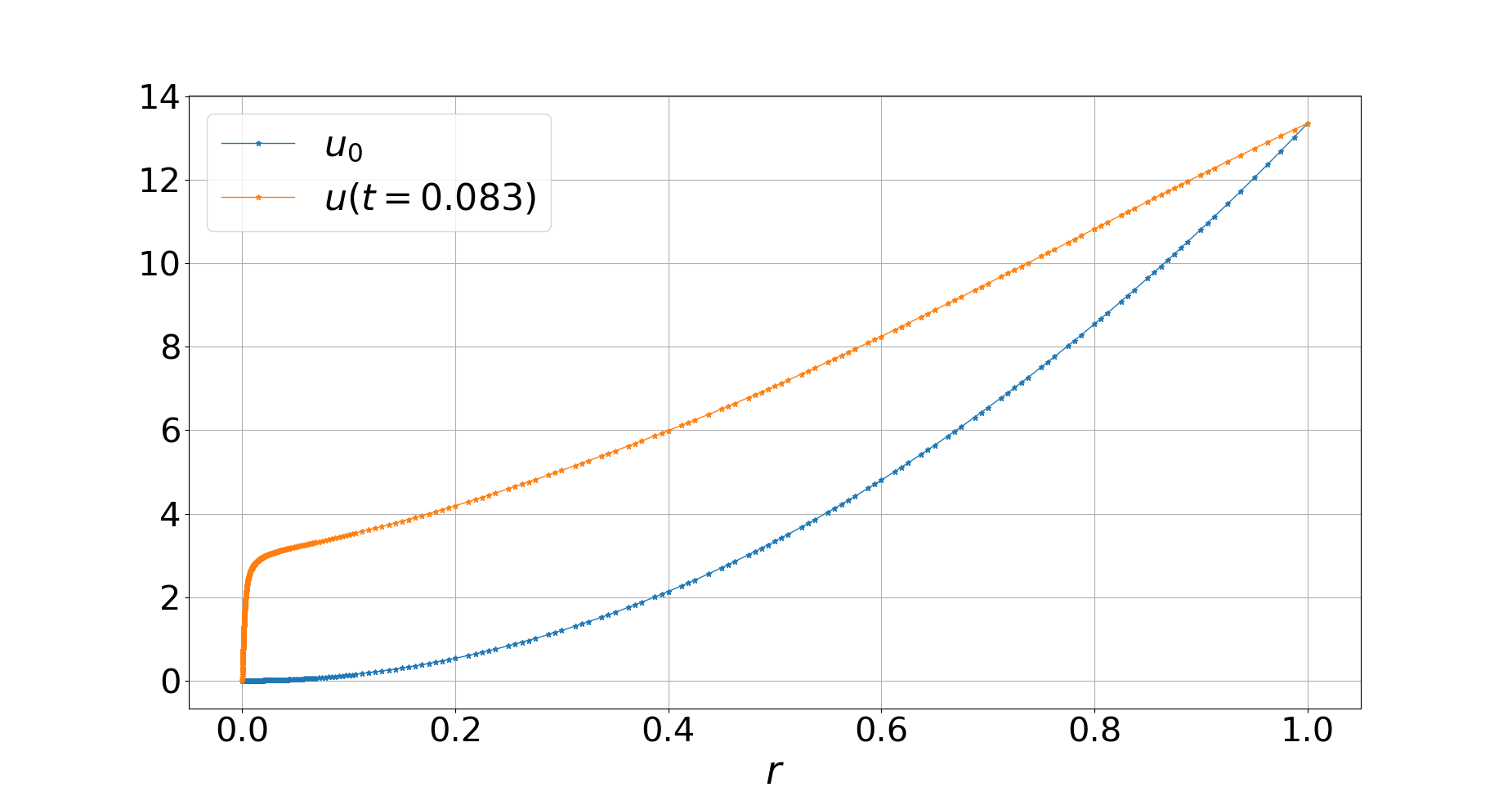}
	\caption{Blow up: discrete solution of \eqref{eq:HMHF_1DPDE} with initial condition \eqref{eq:InitCond_blowup} }
	\label{fig:RSHMHF_blowup}
\end{figure}
Figure \ref{fig:RSHMHF_blowup} shows that the numerical scheme \eqref{eq:HMHF_1D_full_discr} with a strongly refined grid close to the origin can capture the singularity formation well. We transform this discrete solution of the reduced problem to the two-dimensional vector valued solution using the transformation \eqref{eq:RSHMHF_SphericalTransformation}. Here, the two-dimensional mesh is also strongly refined around the origin. The result is shown in Figure \ref{fig:HMHF_blowup_transformation} and will serve as the reference solution for this section.
\begin{figure}[ht!]
	\centering
	\begin{subfigure}{0.49\linewidth}
		\centering
		\includegraphics[
		width=\linewidth
		]{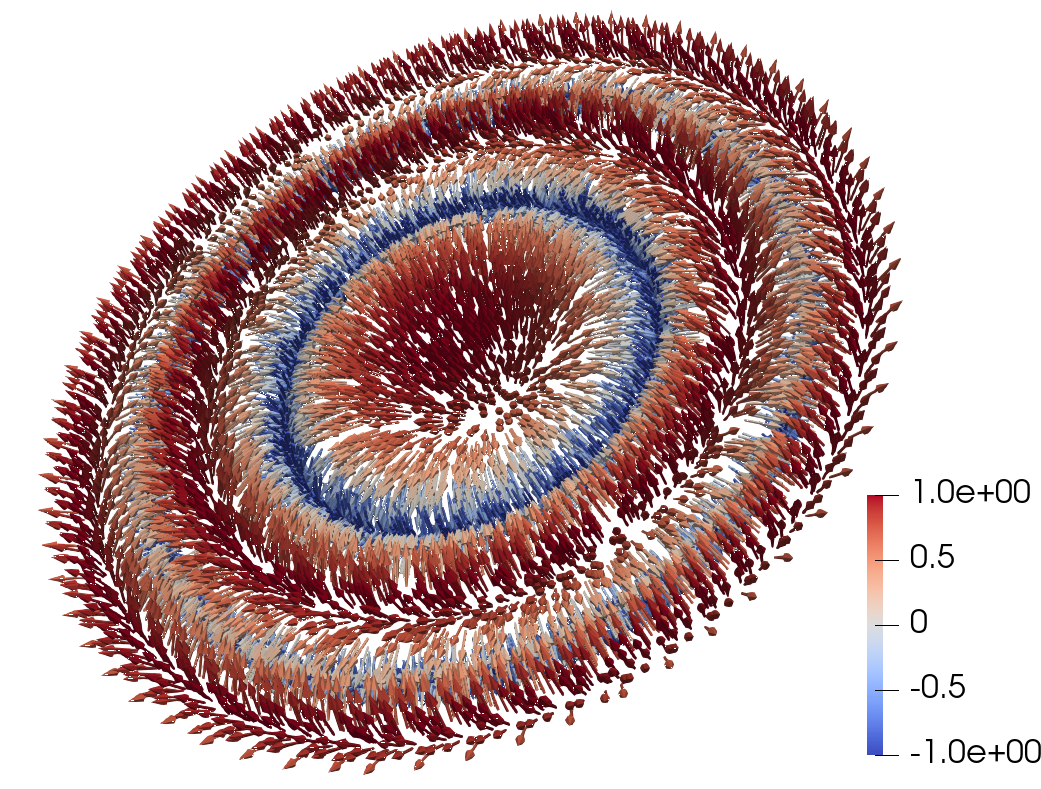}
		\caption{Initial condition}
		\label{fig:2d_blowup_init}
	\end{subfigure}
	\hfill
	\begin{subfigure}{0.49\linewidth}
		\centering
		\includegraphics[
		width=\linewidth
		]{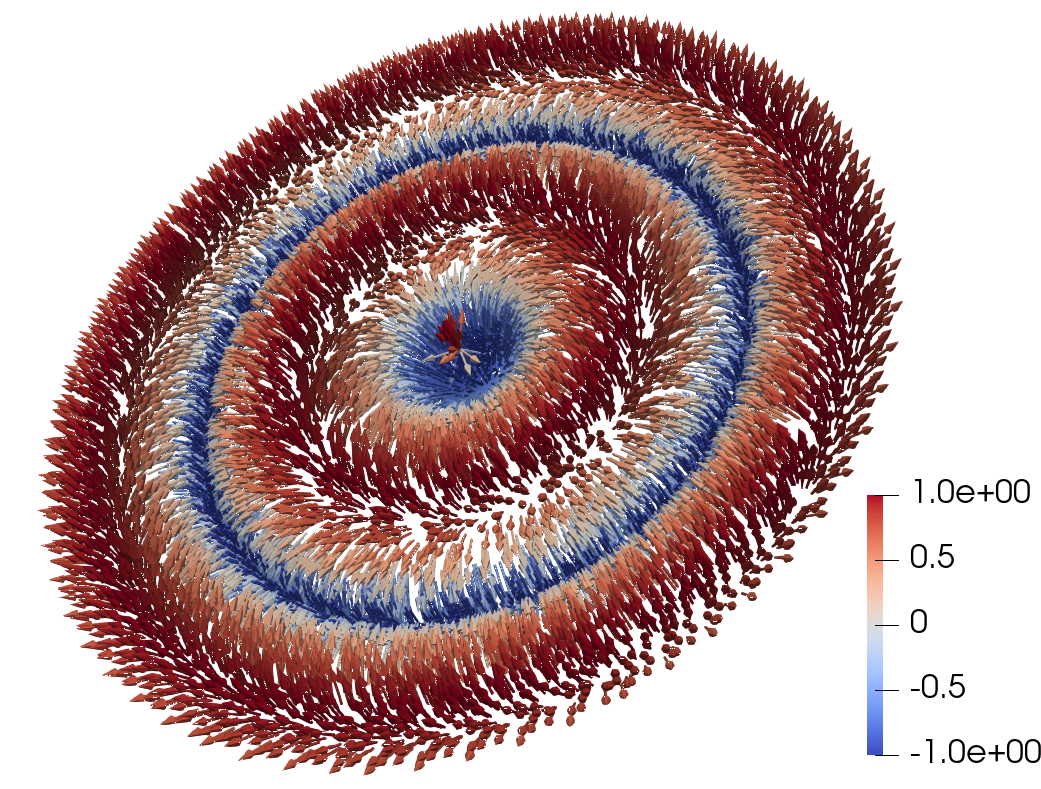}
		\caption{$t=0.083$}
		\label{fig:2d_blowup}
	\end{subfigure}
	\begin{subfigure}{0.49\linewidth}
		\centering
		\includegraphics[
		width=\linewidth
		]{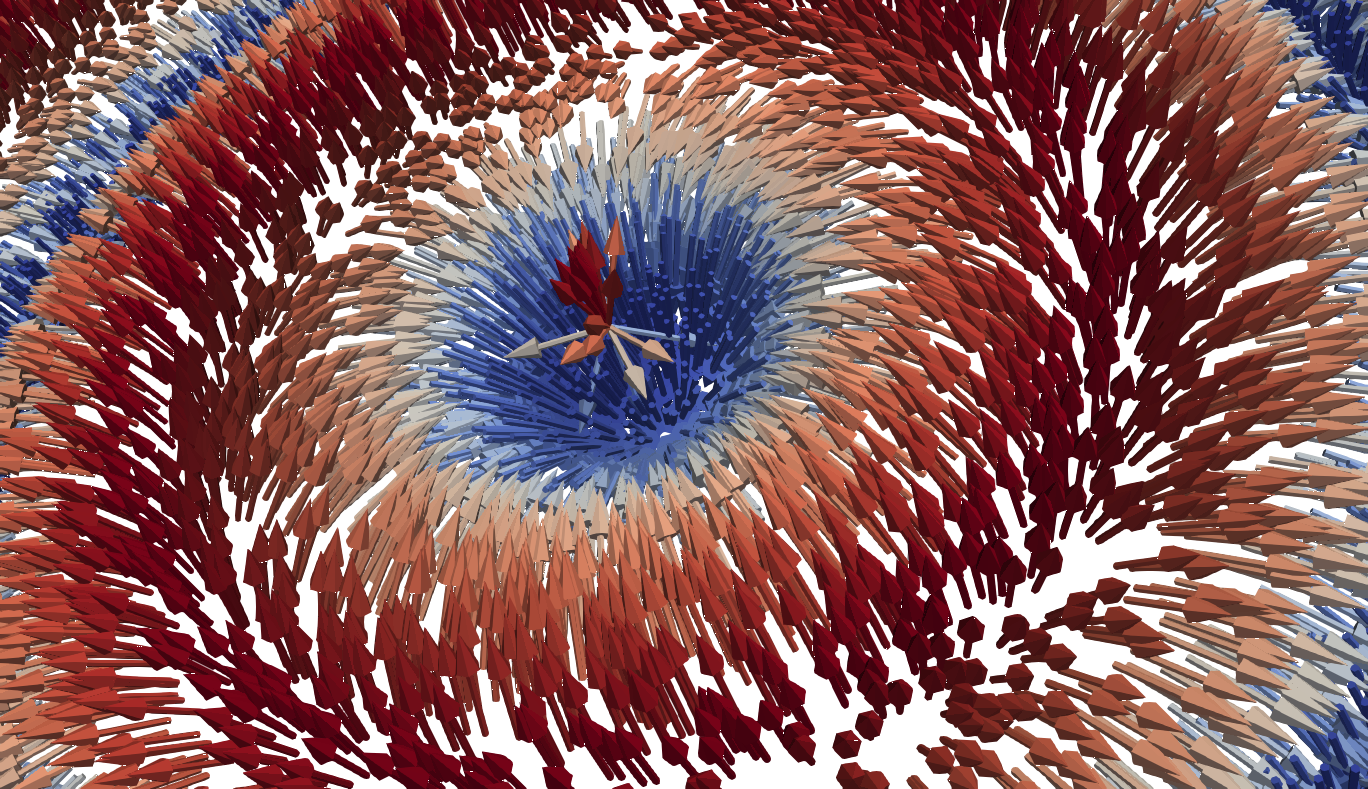}
		\caption{Zoom into the neighborhood of the origin from \ref{fig:2d_blowup}}
	\end{subfigure}
	\caption{Transformation of discrete solution from Figure \ref{fig:RSHMHF_blowup} to the two-dimensional vector-valued solution using \eqref{eq:RSHMHF_SphericalTransformation}; color represents the value of the third component}
	\label{fig:HMHF_blowup_transformation}
\end{figure}

Now we run the methods for the two-dimensional problem \eqref{HMHFeq} of Section \ref{sec:FEM_HMHF} using the initial condition \eqref{eq:HMHF_InitialCondSphericallySymmetric} corresponding to \eqref{eq:InitCond_blowup}. We choose BDF1 and linear finite elements ($p=1$) with a fixed time step $\tau = 10^{-4}$ and mesh size $h=10^{-2}$  in all methods. For the CPFEM$+$FP and CPFEM$+$Newton we choose $\epsilon = 10^{-10}$ for the stopping criterion. 
For CPFEM$+$FP we take a much smaller time step, $\tau = 10^{-5}$, to guarantee convergence of the fixed point  iteration.  
 In CPFEM$+$FP, Algorithm \ref{algo:HMHF_Bartels} needed for the first few time steps on average 25 iterations and later around 10 iterations to satisfy the stopping criterion in each time step. The Newton iteration  required on average 4 iterations in each time step. 

The results obtained by the different  methods are shown in Figures \ref{fig:HMHF_TFEM_blowup} - \ref{fig:HMHF_Newton_blowup}. The methods CPFEM$+$FP and  CPFEM$+$Newton produce very similar results, hence we show results only for the latter. We observe that \emph{the discrete solutions of all methods do \emph{not} match the reference solution} in Figure \ref{fig:2d_blowup} close to the approximated blow up time. Until $t \approx 0.02$  all discrete solutions behave in the same way and show qualitative good agreement with the reference solution, cf. subfigures (a) in Figures \ref{fig:HMHF_TFEM_blowup} - \ref{fig:HMHF_Newton_blowup}.
In TFEM  the symmetry of the reference solution \eqref{eq:HMHF_SphericallySymmtricSolution} is broken after $t\approx 0.02$, while this symmetry breaking occurs later for the PPFEM and CPFEM$+$Newton methods, namely after $t\approx 0.03$. The latter two methods yield qualitatively similar results for all $t \in [0,T]$. In particular, both display a swirling motion: The vector fields are winding around the origin. Both methods have in common that the unit length constraint is (up to rounding error) exact at the discretization points. In contrast, the  TFEM method shows a different solution behavior, cf. Figure~\ref{fig:HMHF_TFEM_blowup}. Note that in TFEM the unit length constraint is treated in  a more relaxed implicit way. In Figure ~\ref{fig:blowup_HMHF_unit_length} we show the error in the unit length constraint at the  end $t=0.083$, i.e., $\big|1-|\mathbf{u}_h^J|\big|$ for the methods PPFEM and TFEM.  The unit length violation is strongest  around the origin where the singularity occurs. For TFEM the unit length violation is overall stronger compared to the PPFEM.

Finally, we consider the Dirichlet energy $\frac{1}{2}\int_\Omega |\nabla \mathbf{u}|^2\, dx$. 
The energy of the reference solution and of the discrete solutions  are shown in Figure \ref{fig:blowup_energie}. It turns out that the symmetry breaking corresponds to a significant  drop of the Dirichlet energy compared to the energy of the reference solution. Note that the energy behavior of the PPFEM and CPFEM$+$Newton discrete solutions is very  similar, whereas the TFEM discrete solution behaves differently.

For all of these methods decreasing the time step and/or refining the mesh size shows essentially the same results.  In the cases of TFEM and PPFEM using their BDF2 and $p=2$ (quadratic finite elements) variants also yield similar results.

These results indicate that approximating the solution of a HMHF problem with finite time blow up is a challenging task. Already in this relatively simple example it turns that direct application of the three established methods that we compared above does not yield satisfactory results. The different numerical approximations obtained by the different methods suggest that the continuous problem  \eqref{HMHFeq} is  poorly conditioned and has besides the  rotationally symmetric  solution \eqref{eq:HMHF_SphericallySymmtricSolution} other nearby ``solution branches''. Note that already in the discretization of the initial condition \eqref{eq:HMHF_InitialCondSphericallySymmetric} the exact rotational symmetry may be lost in the three methods for the vector valued problem \eqref{HMHFeq}. It might well be that if these methods are combined with (strong) local refinements in space (close to the origin) and time (close to the blow up point in time) they are able to preserve the symmetry property and yield accurate approximations of the continuous solutions. We consider all these issues to be interesting topics for further research. 

\begin{figure}[ht!]
	\centering
	
	\begin{subfigure}{0.49\linewidth}
		\centering
		\includegraphics[
		width=\linewidth
		]{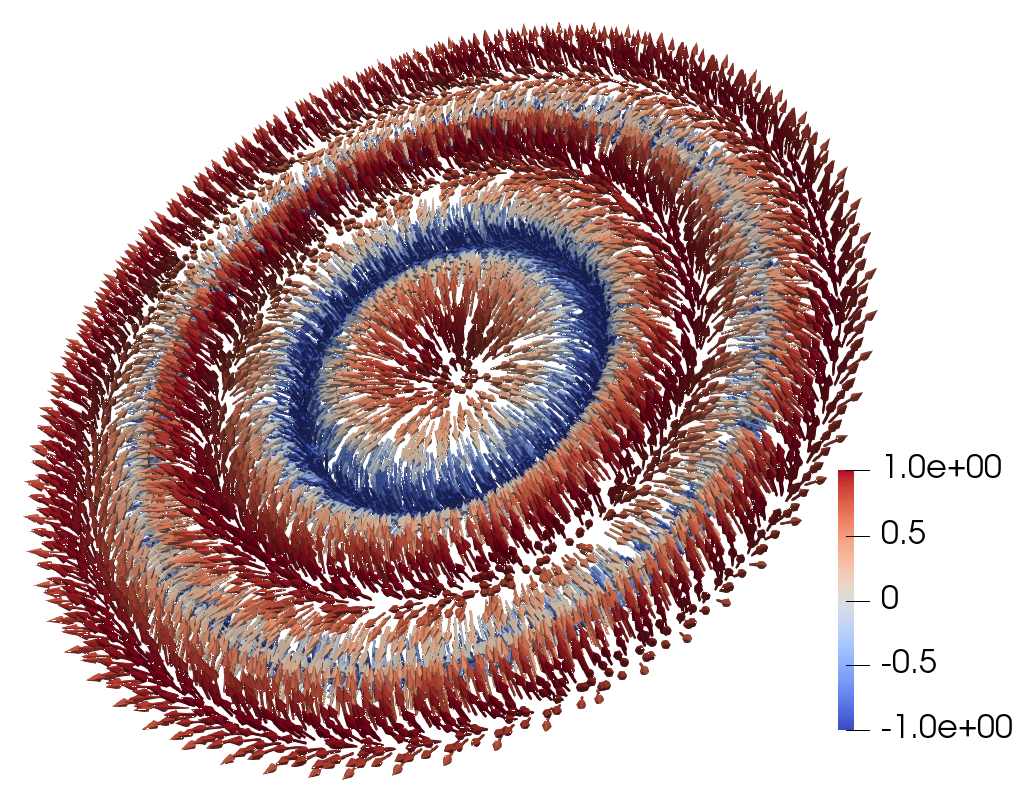}
		\caption{$t=0.02$}
	\end{subfigure}
	\hfill
	\begin{subfigure}{0.49\linewidth}
		\centering
		\includegraphics[
		width=\linewidth
		]{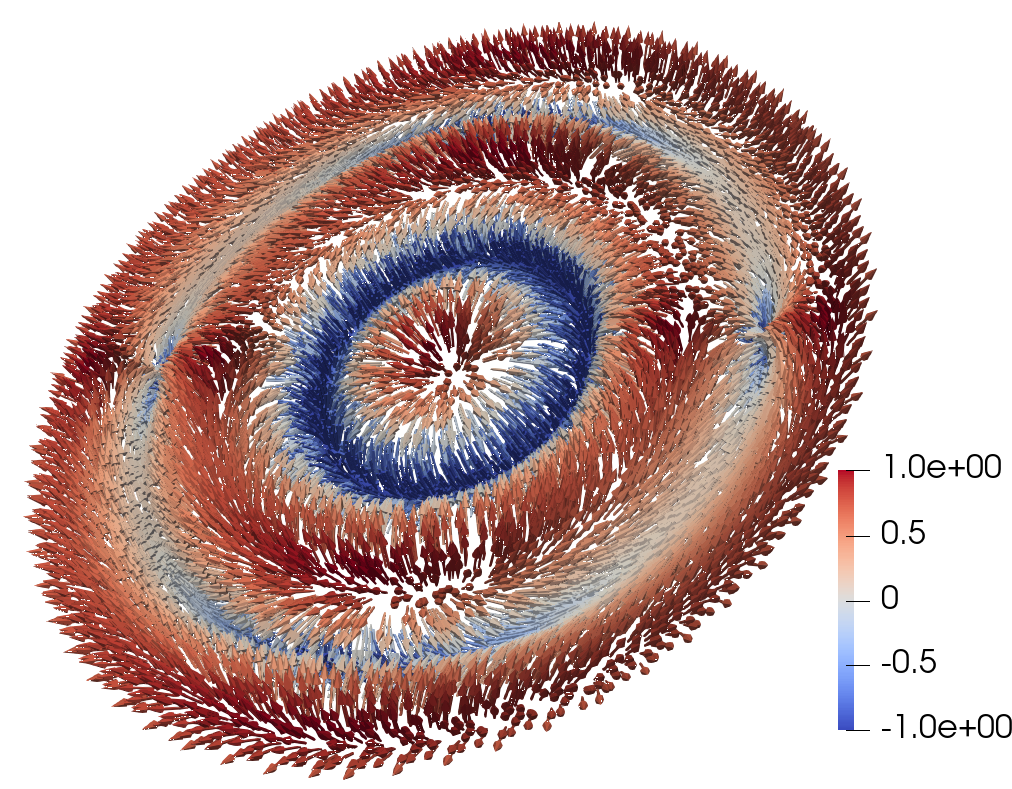}
		\caption{$t=0.03$}
	\end{subfigure}
	
	\begin{subfigure}{0.49\linewidth}
		\centering
		\includegraphics[
		width=\linewidth
		]{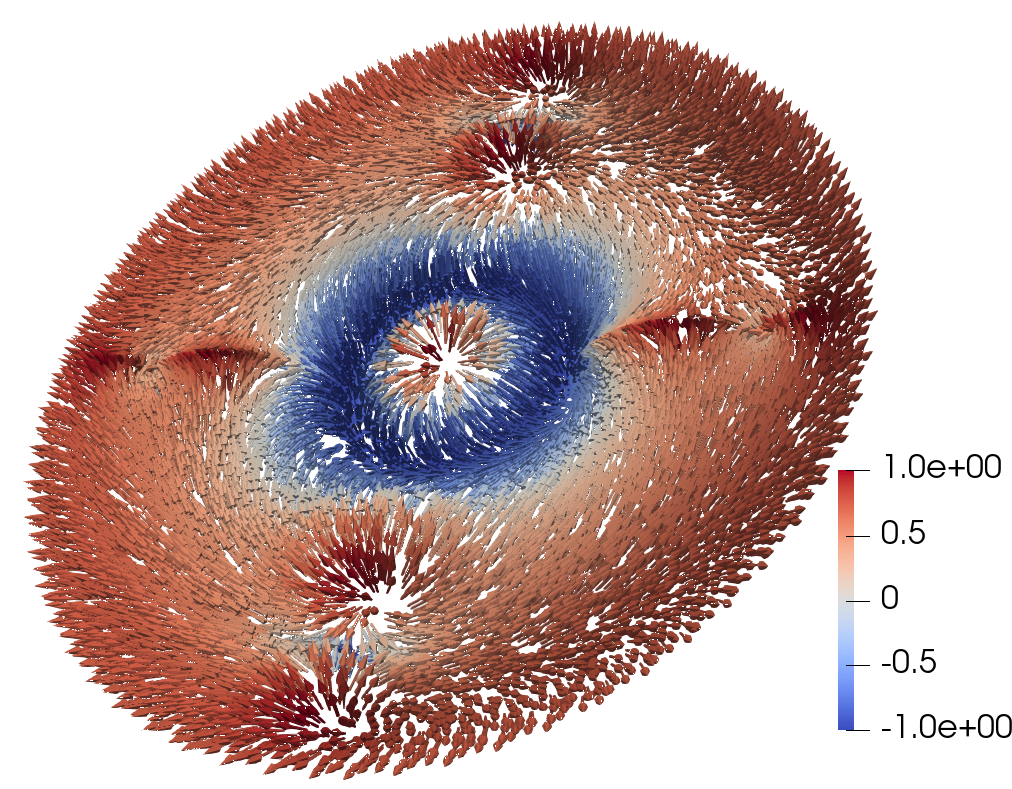}
		\caption{$t=0.05$}
	\end{subfigure}
	\hfill
	\begin{subfigure}{0.49\linewidth}
		\centering
		\includegraphics[
		width=\linewidth
		]{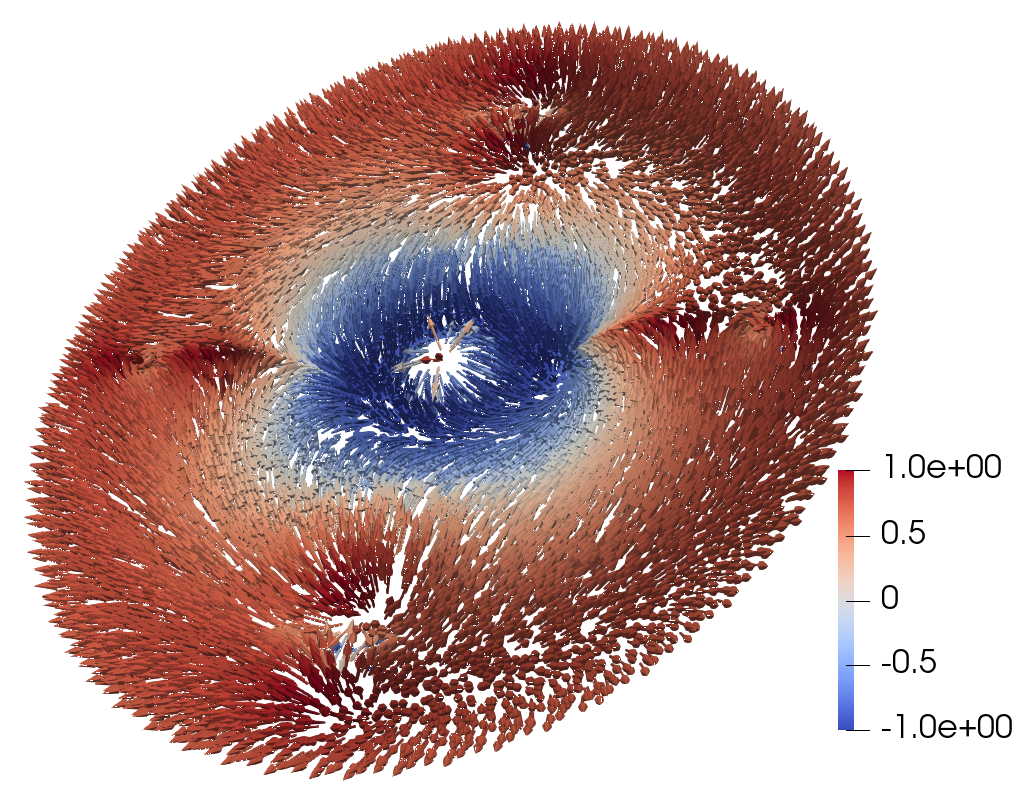}
		\caption{$t=0.083$}
	\end{subfigure}
	
	\caption{Blow up case: TFEM (\eqref{eq:HMHF_TFEM_1}-\eqref{eq:HMHF_TFEM_2} and \eqref{eq:HMHF_TFEM_update}) with initial condition corresponding to \eqref{eq:InitCond_blowup}; color represents the value of the third component }
	\label{fig:HMHF_TFEM_blowup}
\end{figure}

\begin{figure}[ht!]
	\centering
	
	\begin{subfigure}{0.49\linewidth}
		\centering
		\includegraphics[
		width=\linewidth
		]{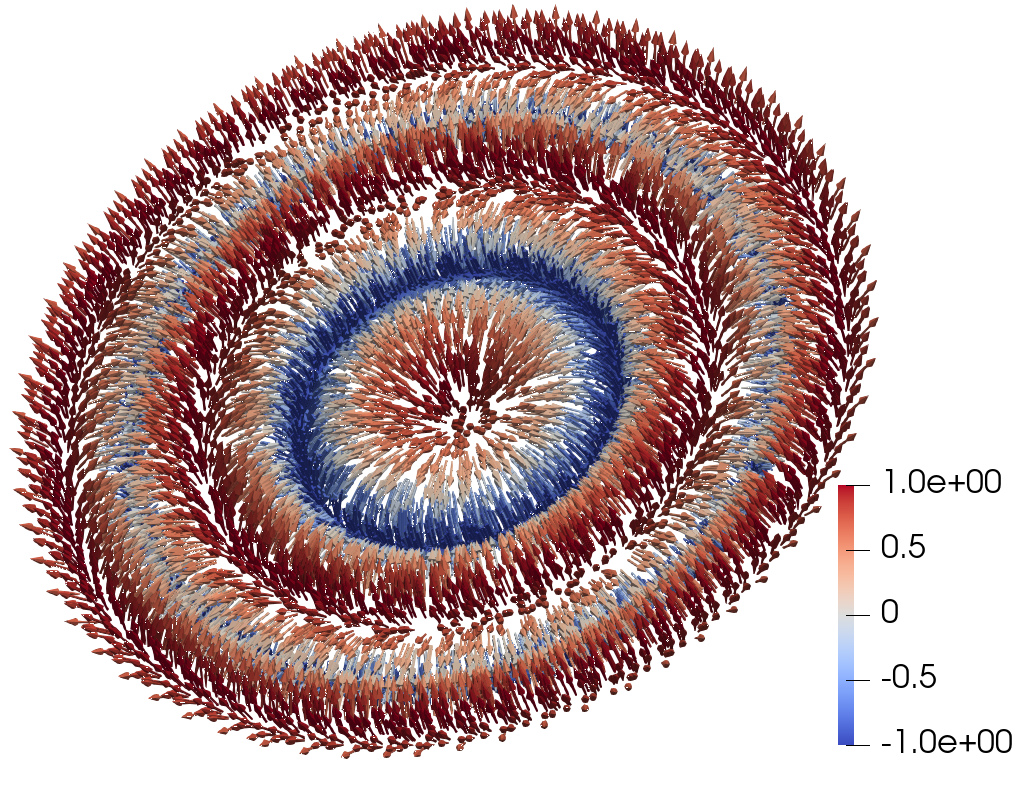}
		\caption{$t=0.02$}
	\end{subfigure}
	\hfill
	\begin{subfigure}{0.49\linewidth}
		\centering
		\includegraphics[
		width=\linewidth
		]{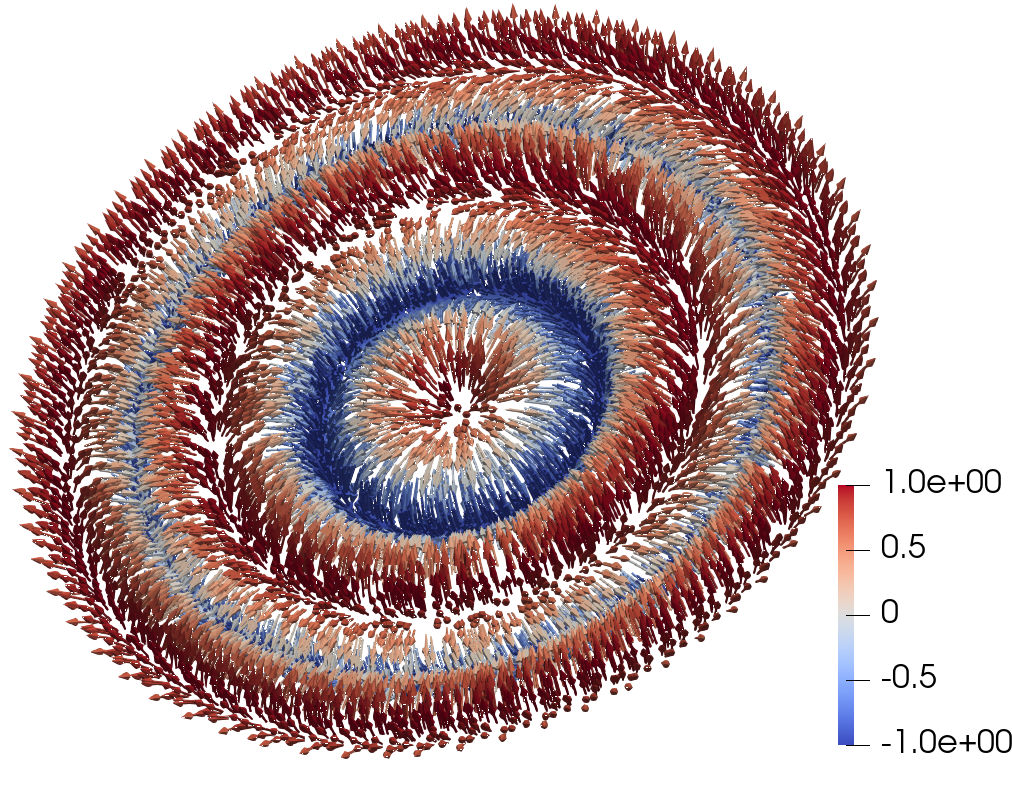}
		\caption{$t=0.03$}
	\end{subfigure}
	
	\begin{subfigure}{0.49\linewidth}
		\centering
		\includegraphics[
		width=\linewidth
		]{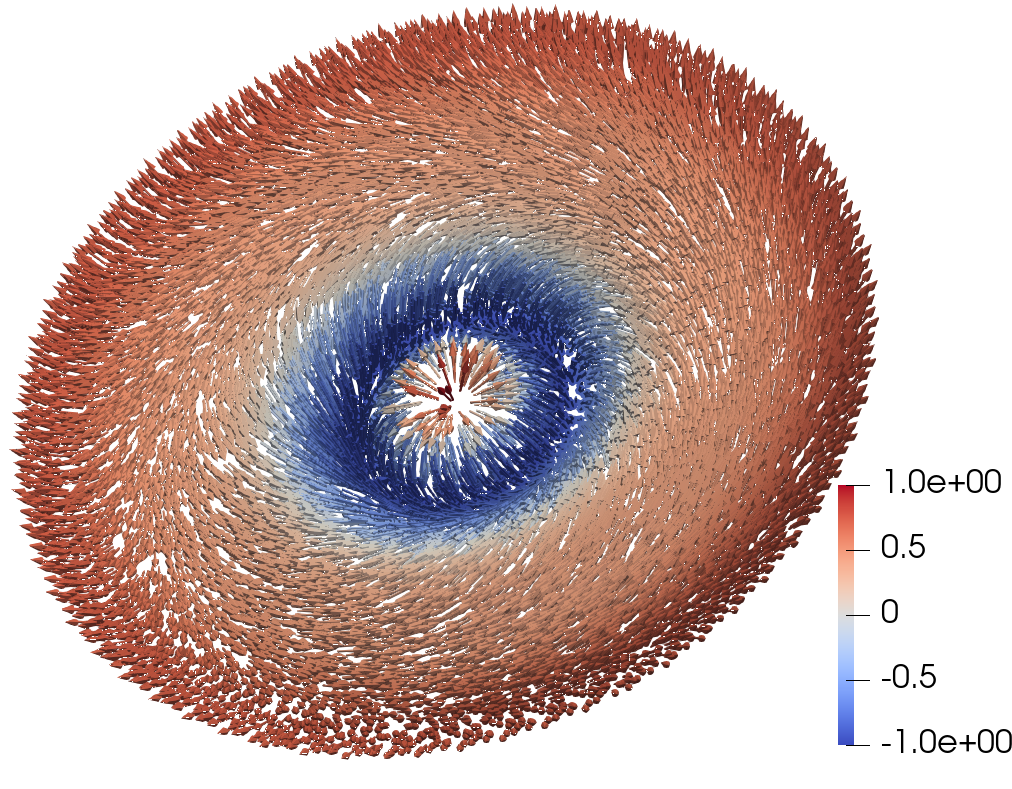}
		\caption{$t=0.05$}
	\end{subfigure}
	\hfill
	\begin{subfigure}{0.49\linewidth}
		\centering
		\includegraphics[
		width=\linewidth
		]{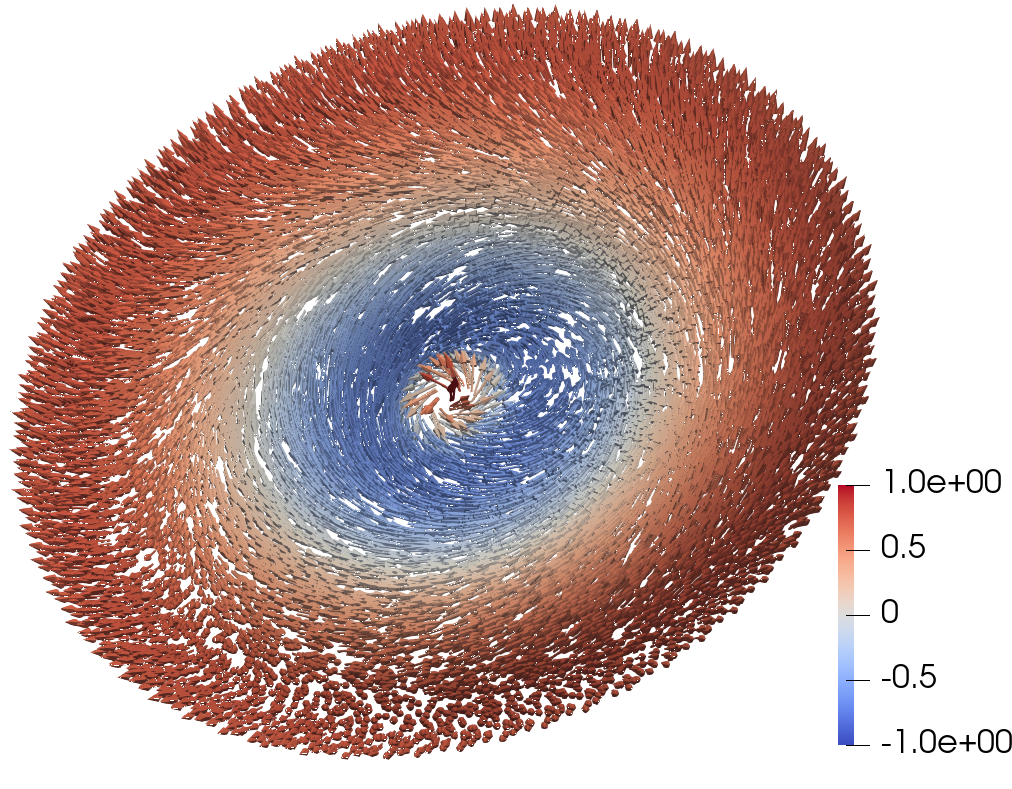}
		\caption{$t=0.083$}
	\end{subfigure}
	
	\caption{Blow up case: PPFEM \eqref{eq:HMHF_PPFE}-\eqref{eq:HMHF_PPFE_normalization} with initial condition corresponding to  \eqref{eq:InitCond_blowup}; color represents the value of the third component }
	\label{fig:HMHF_PP_blowup}
\end{figure}

\begin{figure}[ht!]
	\centering
	
	\begin{subfigure}{0.49\linewidth}
		\centering
		\includegraphics[
		width=\linewidth
		]{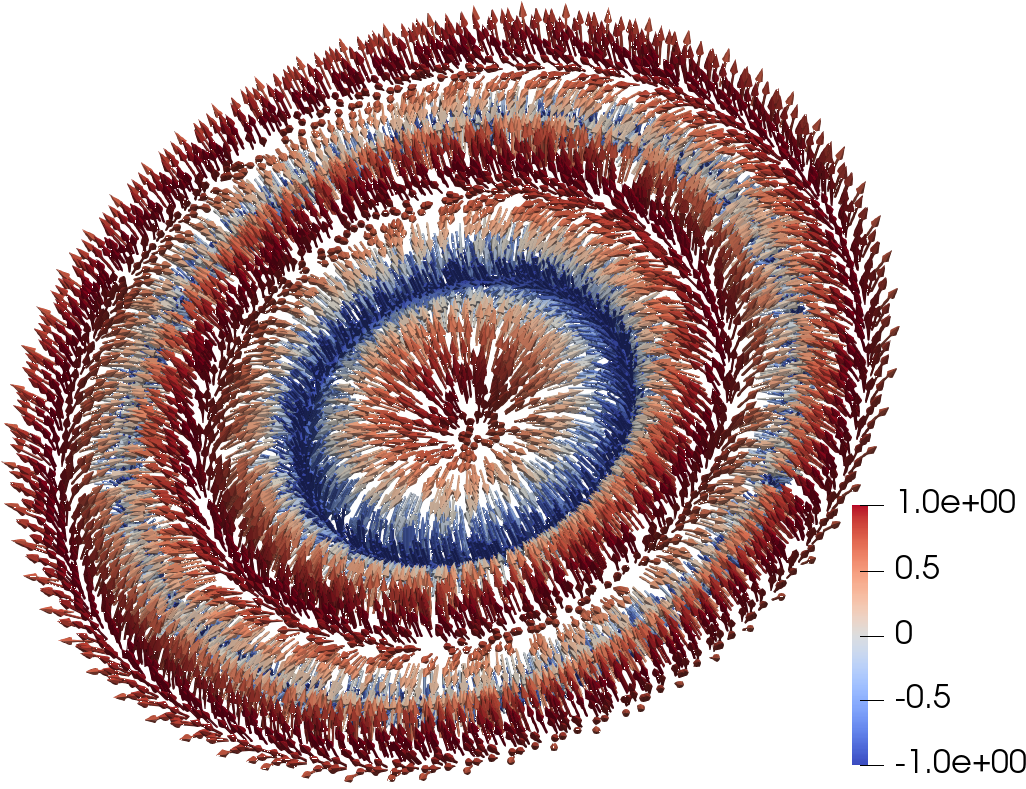}
		\caption{$t=0.02$}
	\end{subfigure}
	\hfill
	\begin{subfigure}{0.49\linewidth}
		\centering
		\includegraphics[
		width=\linewidth
		]{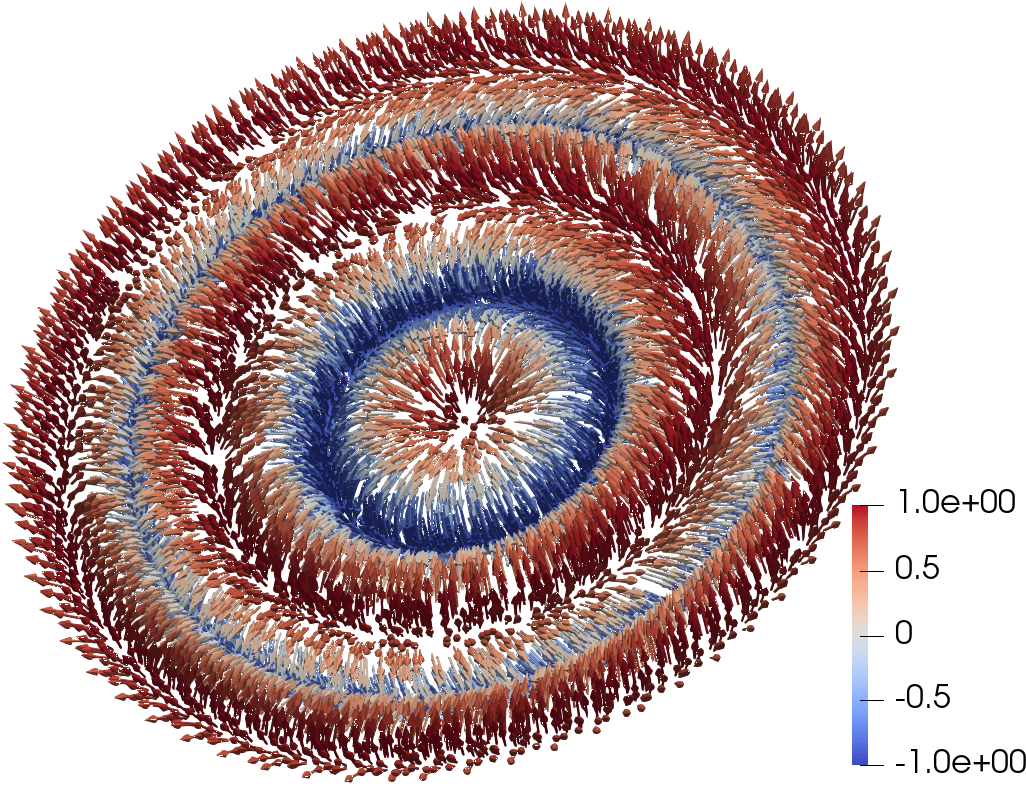}
		\caption{$t=0.03$}
	\end{subfigure}
	
	\begin{subfigure}{0.49\linewidth}
		\centering
		\includegraphics[
		width=\linewidth
		]{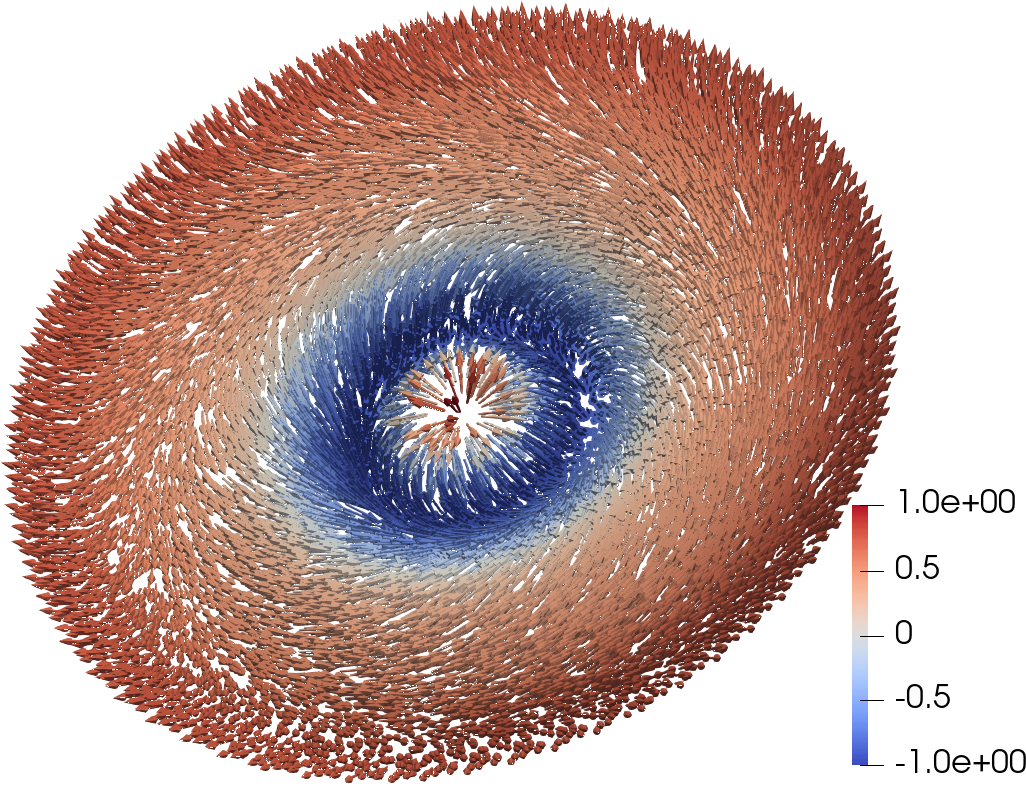}
		\caption{$t=0.05$}
	\end{subfigure}
	\hfill
	\begin{subfigure}{0.49\linewidth}
		\centering
		\includegraphics[
		width=\linewidth
		]{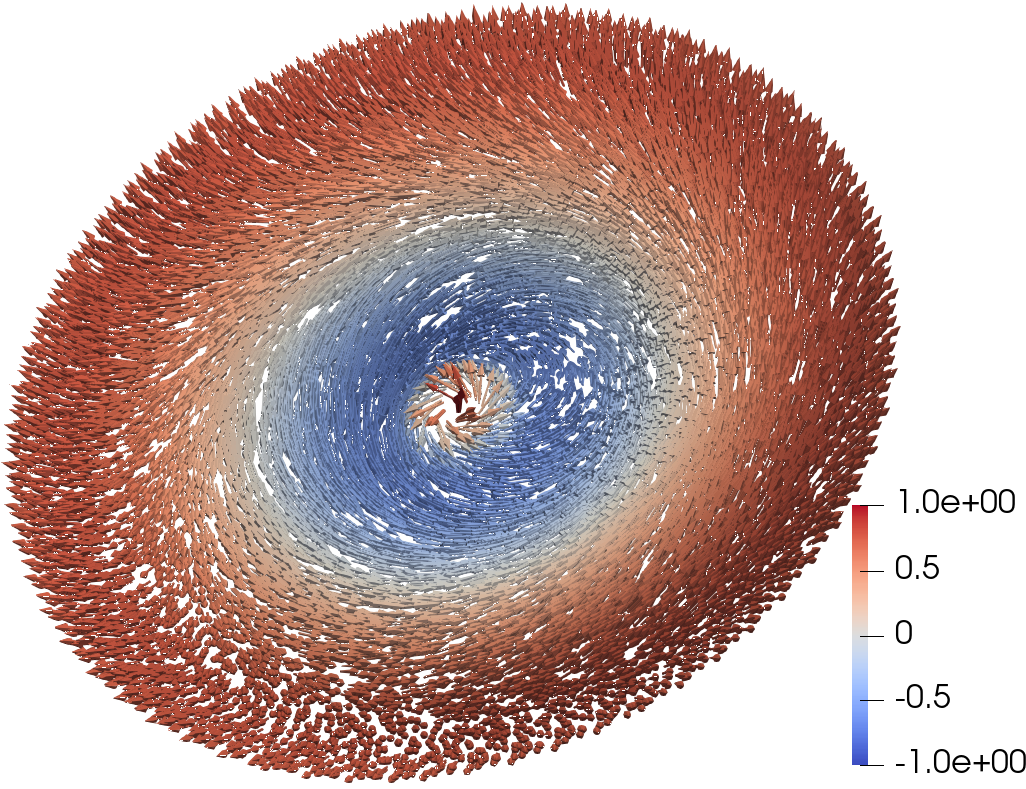}
		\caption{$t=0.083$}
	\end{subfigure}
	
	\caption{Blow up case: CPFEM$+$Newton,  \eqref{nonlineareq} combined with 
 \eqref{eq:CPFEM_Newton}, with initial condition corresponding to \eqref{eq:InitCond_blowup}; color represents the value of the third component }
	\label{fig:HMHF_Newton_blowup}
\end{figure}
\begin{figure}[ht!]
	\centering
	
	\begin{subfigure}{0.49\linewidth}
		\centering
		\includegraphics[
		width=\linewidth
		]{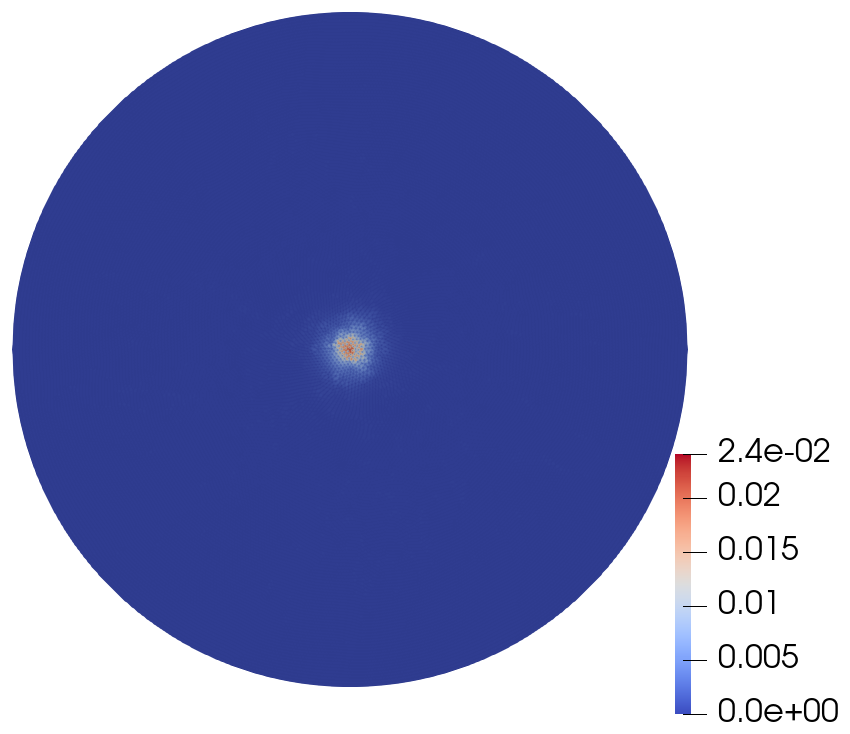}
		\caption{PPFEM \eqref{eq:HMHF_PPFE}-\eqref{eq:HMHF_PPFE_normalization} at $t=0.083$}
	\end{subfigure}
	\begin{subfigure}{0.49\linewidth}
		\centering
		\includegraphics[
		width=\linewidth
		]{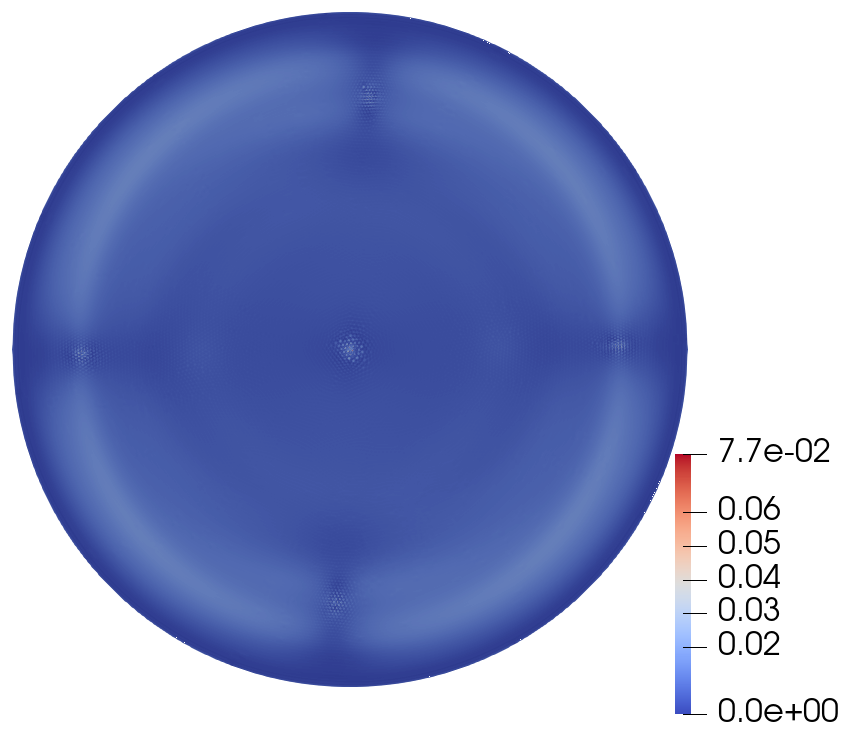}
		\caption{TFEM (\eqref{eq:HMHF_TFEM_1}-\eqref{eq:HMHF_TFEM_2} and \eqref{eq:HMHF_TFEM_update}) at $t=0.083$}
	\end{subfigure}

	\caption{Blow up case: unit length violation $|1-|\mathbf{u}_h^J| \,|$}
	\label{fig:blowup_HMHF_unit_length}
\end{figure}
\begin{figure}[ht!]
	\centering
	\includegraphics[width=\textwidth]{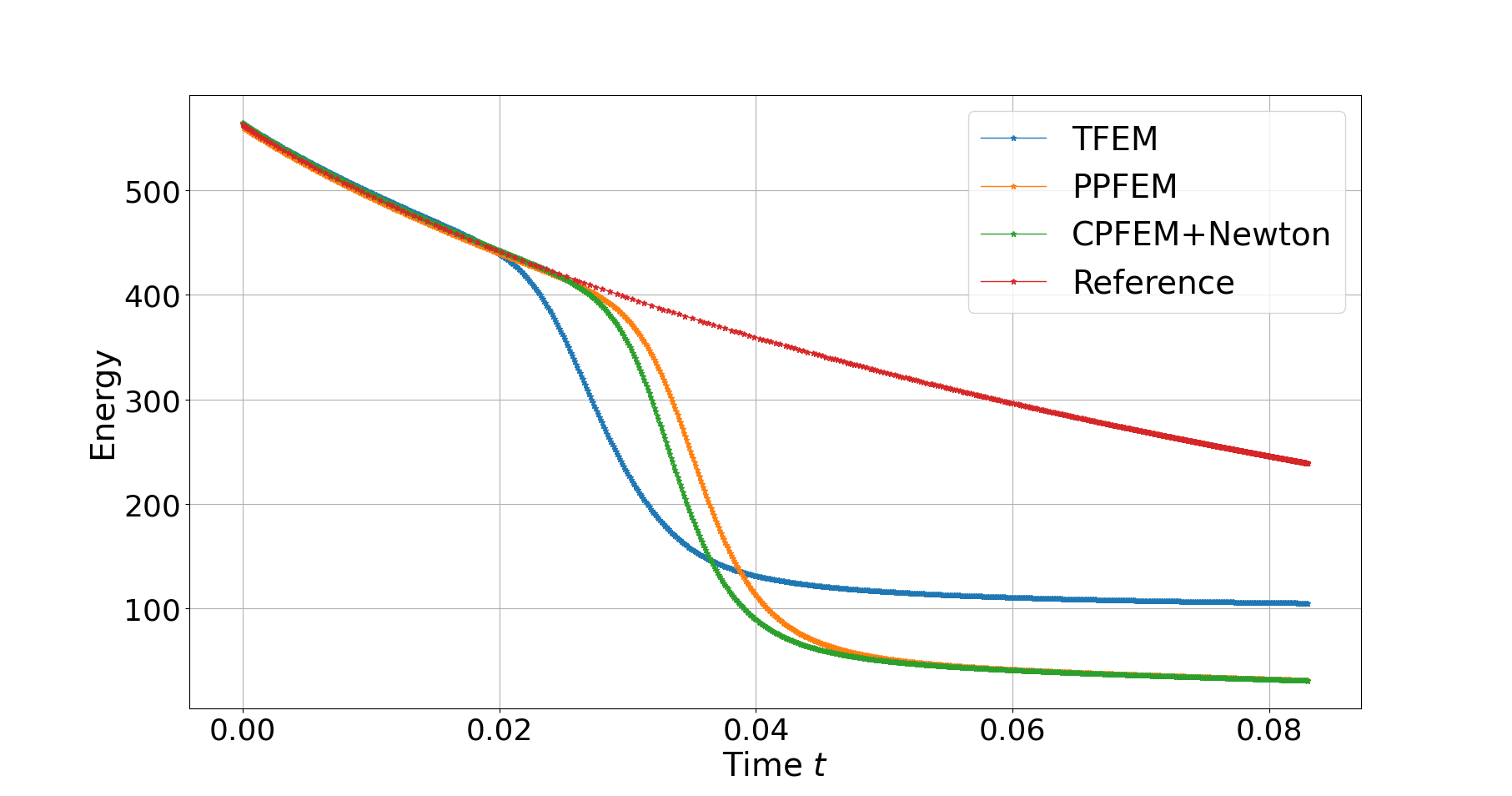}
	\caption{Blow up case: Energies of the discrete solutions from Figures \ref{fig:HMHF_blowup_transformation}-\ref{fig:HMHF_Newton_blowup}}
	\label{fig:blowup_energie}
\end{figure}

\section{Comparison of methods} \label{SecComparison}

We summarize and compare properties of the three methods PPFEM, TFEM and CPFEM, where for the latter we have the two variants CPFEM$+$FP and CPFEM$+$Newton. We address three aspects: discretization accuracy (smooth regime), computational work per time step and application to a blow up example.\\
\emph{Discretization accuracy}. 
All three methods show similar convergence behavior with BDF1 und piecewise linear finite element spaces. 
The method CPFEM$+$FP suffers from a severe time step restriction needed for convergence of the fixed point iteration. This restriction is not needed for convergence of the  CPFEM$+$Newton method. PPFEM with BDF2 combined with piecewise quadratic finite elements shows a significantly higher rate of convergence  in the mesh size than expected. The versions of TFEM that use BDF2 and piecewise quadratic finite elements show the optimal rates of convergence, both with respect to time step size and mesh size. Higher order BDF schemes do not satisfy the relation \eqref{eq:HMHF_2D_CrossFEM_UnitLength} and it is not clear how to extend the CPFEM to higher order BDF methods. For the methods  PPFEM and TFEM higher order BDF variants are available. 
\\
\emph{Computational work per iteration}. In the methods PPFEM and TFEM one has to solve only one linear system per time step, whereas in CPFEM, due to the inner iteration,  one typically has   more than one linear system to solve. In the inner iteration of  CPFEM$+$Newton the linear system for the $\mathbf{u}$-unknown is of the form \eqref{Newtonlinear}, whereas in CPFEM$+$FP the linear system is of the form \eqref{FPlinear}. Since in the latter the stiffness matrix part is not present, the conditioning of the system matrix is better than in \eqref{Newtonlinear}, which often is an advantage if one uses an iterative solver.  
Despite this better conditioning property, the method  CPFEM$+$FP is  computationally the most expensive  one since one has to satisfy the severe time step restriction and in each time step one needs to solve several ($2$ to $4$) of these linear systems.  In our comparative study this method turns out to be the least efficient one. 
In PPFEM one has to solve one linear system of a similar form as in \eqref{Newtonlinear} for the $\mathbf{u}$-unknown. The computational work needed for the renormalization is  negligible. Since in   CPFEM$+$Newton one typically needs more than one Newton iteration the computational costs per iteration for this method are higher than for PPFEM. In TFEM one has to solve a saddle point system for the $\mathbf{u}$-unknown and the Lagrange multiplier, cf. \eqref{eq:HMHF_TFEM_1}-\eqref{eq:HMHF_TFEM_2}. The number of additional unknowns due to the Lagrange multiplier is approximately one third of the number of unknowns needed for approximation of $\mathbf{u}$. A nice structural property of the system matrix is that it is always symmetric and the block corresponding to the $\mathbf{u}$ unknowns is symmetric positive definite. Due to its higher dimension, solving this system is in general more expensive than solving the system in PPFEM. In our implementation (where we used a direct solver) the PPFEM has the least computational work per iteration. 
\\
\emph{Application to a blow up example}. We applied the three methods to a relatively simple HMHF problem with finite time blow up, cf. Section~\ref{SecOutlook}. The solution of this problem has a rotational symmetry property.
In all three methods this symmetry property is lost and the resulting numerical solutions are ``wrong'' in the sense that these approximations are qualitatively different from the symmetric reference solution. The symmetry breaking occurs later for the PPFEM and CPFEM methods as for  TFEM. This may be related to the fact that in the former two methods the unit length constraint is (up to rounding error) exact at the discretization points, wheras in TFEM this constraint is treated in  a more relaxed implicit way.
\\[2ex]
{\bf Acknowledgements} The authors acknowledge funding by the Deutsche Forschungsgemeinschaft (DFG, German Research Foundation) – project number 442047500 – through the Collaborative Research Center “Sparsity and Singular
Structures” (SFB 1481).

\begin{appendices}    
\section{Proof of Lemma~\ref{leminfsup}} \label{appendix:InfSupStab} 
Take $w_h \in V_{h,0}^p$. The constants below (all denoted by   $c$) may depend on $\bu$, but are independent of $w_h$, $\tau$, $h$. Let $I_h: C(\bar D;\mathbb{R}^3) \to \bV_{h,0}^p$ be the componentwise nodal interpolation in the finite element space. We write $\bu(t_j)=\bu(t_j,\cdot)$. For $K \in \mathcal{T}_h$ we obtain, using $|w_h|_{H^{p+1}(K)}=0$ and an inverse inequality:
\begin{align*}
  \|I_h(\bu(t_j)w_h)-\bu(t_j)w_h\|_{L^2(K)} & \leq c h^{p+1}|\bu(t_j)w_h|_{H^{p+1}(K)} \\
   & \leq c h^{p+1} \sum_{\ell =0}^p |\bu(t_j)|_{W^{p+1-\ell, \infty}(K)} |w_h|_{H^{\ell}(K)} \\
    & \leq c h \|\bu\|_{W^{p+1,\infty}(K)}\|w_h\|_{L^2(K)}.
\end{align*}
From this we obtain  (with $c=c(\bu)$)
\begin{equation} \label{h1}
 \|I_h(\bu(t_j)w_h)-\bu(t_j)w_h\|_{L^2} \leq c h \|w_h\|_{L^2}.
\end{equation}
We restrict to $\tau \in (0,\varepsilon_1]$, $h \in (0,\varepsilon_2]$ with suitable $ \varepsilon_1>0$, $\varepsilon_2 >0$. Define $\delta_{\tau,h}:= \max_{0 \leq j \leq J} \|\bu_h^j - \bu(t_j)\|_{L^\infty}$.  Due to assumption~\eqref{condconv} the size of $\delta_{\tau,h}$ is controlled by $\varepsilon_1$ and $\varepsilon_2$.  Using $|\bu(t_n)|=1$ on $\Omega$ for all $0 \leq n \leq J$, we obtain  $\left||\bu_h^n|-1 \right| \leq \delta_{\tau, h}$  and $ \left| |2 \bu_h^n - \bu_h^{n-1}| - 1\right| \leq 3 \delta_{\tau, h} + c \tau$. Hence, for the extrapolations defined in \eqref{extrapol} and with $\varepsilon_1, \varepsilon_2$ sufficiently small we get
\begin{equation} \label{h2}
  \| \hat\bu_h^{j+1, (k)} - \bu(t_{j})\|_{L^\infty} \leq c(\delta_{\tau,h} + \tau), \quad k=1,2.
\end{equation} 
Since $w_h=0$ on the boundary of the triangulation we have $\bv_h=I_h(\bu(t_j)w_h) \in \bV_{h,0}^p$. Using \eqref{h1} and \eqref{h2} we obtain
\begin{align*}
 & \sup_{\mathbf{v}_{h} \in \mathbf{{V}}_{h,0}^p} \mathbf{b}(\hat{\mathbf{u}}_{h}^{j+1,(k)}; \mathbf{v}_{h},w_{h}) \geq
  \mathbf{b}(\hat{\mathbf{u}}_{h}^{j+1,(k)}; I_h(\bu(t_j)w_h),w_{h}) \\ & 
  = \left( \hat{\mathbf{u}}_{h}^{j+1,(k)} \cdot I_h(\bu(t_j)w_h),w_{h} \right)_{L^2} 
  \\
  & = \left( \big(\hat{\mathbf{u}}_{h}^{j+1,(k)}- \bu(t_j)\big)\cdot I_h(\bu(t_j)w_h), w_h \right)_{L^2} +\left( \bu(t_j) \cdot \bu(t_j) w_h, w_h \right)_{L^2} \\
   &  \quad + \left( \bu(t_j) \cdot \big( I_h(\bu(t_j)w_h) - \bu(t_j)w_h \big), w_h \right)_{L^2} \\
   & \geq \|w_h\|_{L^2}^2 - c \|\hat{\mathbf{u}}_{h}^{j+1,(k)}- \bu(t_j)\|_{L^\infty} \|w_h\|_{L^2}^2 - \|I_h(\bu(t_j)w_h) - \bu(t_j)w_h
   \|_{L^2} \|w_h\|_{L^2} \\
   & \geq \big( 1 - c(\delta_{\tau,h} + \tau + h) \big)\|w_h\|_{L^2}^2 .
  \end{align*}
For $\varepsilon_1$ and  $\varepsilon_2$ sufficiently small we have $ 1 - c(\delta_{\tau,h} + \tau + h) \geq \tfrac12$. 
Finally note that $\|I_h(\bu(t_j)w_h)\|_{L^2} \leq c \|w_h\|_{L^2}$ holds. 
\end{appendices}

\newpage
\bibliographystyle{siam}
\bibliography{literature}

\end{document}